\theoremstyle{plain}
\newtheorem{theorem}{Theorem}
\newtheorem*{theorem*}{Theorem}
\newtheorem{lemma}[theorem]{Lemma}
\newtheorem{corollary}[theorem]{Corollary}
\theoremstyle{definition}
\newtheorem{example}[theorem]{Example}
\newtheorem*{remark}{Remark}
\numberwithin{equation}{section}
\numberwithin{theorem}{section}
\DeclarePairedDelimiter{\floor}{\lfloor}{\rfloor}
\DeclarePairedDelimiter{\norm}{|}{|}
\DeclarePairedDelimiter{\coeff}{[}{]}
\newcommand{\set}[1]{\left\{#1\right\}}
\newcommand{\mphrase}[1]{``#1"}
\newcommand{\textdef}[1]{\textit{#1}}
\newcommand{\Case}[1]{\mbox{\textbf{Case} #1:}}
\newcommand{\andq}{\quad \text{and} \quad}
\newcommand{\ZZ}{\mathbb{Z}}
\newcommand{\CC}{\mathbb{C}}
\newcommand{\mids}{\,\mid\,}
\newcommand{\valueat}[1]{{\,}_{\big|\, #1}}
\newcommand{\dop}{\mathcal{D}}
\newcommand{\AD}{\mathrm{A}}
\newcommand{\AP}{\mathbf{A}}
\newcommand{\AR}{\mathbf{A}^{\!\!\star}}
\newcommand{\AG}{\mathcal{A}}
\newcommand{\AS}{\mathbb{A}}
\newcommand{\ASR}{\mathbb{A}^{\!\!\star}}
\newcommand{\ac}{\mathfrak{a}}
\newcommand{\an}{\mathbf{a}}
\newcommand{\BN}{\mathbf{B}}
\newcommand{\BR}{\mathbf{B}^{\star}}
\newcommand{\EN}{\mathbf{E}}
\newcommand{\ER}{\mathbf{E}^{\star}}
\newcommand{\GN}{\mathbf{G}}
\newcommand{\LN}{\mathbf{L}}
\newcommand{\FN}{\mathscr{F}\mspace{-3mu}}
\newcommand{\FB}{\mathscr{F}}
\newcommand{\FF}{\mathfrak{F}}
\newcommand{\FP}{\mathbf{F}\mspace{-3mu}}
\newcommand{\FPD}{\mathbf{F}'\mspace{-7mu}}
\newcommand{\FPDD}{\mathbf{F}''\mspace{-11mu}}
\newcommand{\fp}{\mathbf{f}\mspace{-1mu}}
\newcommand{\fh}{\widehat{\mathbf{f}}\mspace{-1mu}}
\newcommand{\dd}{\mathrm{d}}
\newcommand{\linktext}[1]{\scriptsize\texttt{#1}}
\newcommand{\arXiv}[1]{\href{https://arxiv.org/abs/#1}{\linktext{arXiv:#1~[math.NT]}}}
\title[On (self-) reciprocal Appell polynomials]
{On (self-) reciprocal Appell polynomials:\\
Symmetry and Faulhaber-type polynomials}
\author{Bernd C. Kellner}
\subjclass[2020]{11B83 (Primary), 11B68 (Secondary)}
\address{G\"ottingen, Germany}
\email{bk@bernoulli.org}
\keywords{Appell polynomial; reciprocal and palindromic polynomial;
Bernoulli, Euler, and Faulhaber polynomial; Fibonacci and Lah number}
\begin{document}

\begin{abstract}
The main purpose of this paper is to study generalized (self-) reciprocal
Appell polynomials, which play a certain role in connection with Faulhaber-type
polynomials. More precisely, we show for any Appell sequence when satisfying a
reflection relation that the Appell polynomials can be described by
Faulhaber-type polynomials, which arise from a quadratic variable substitution.
Furthermore, the coefficients of the latter polynomials are given by values of
derivatives of generalized reciprocal Appell polynomials. Subsequently, we show
some applications to the Bernoulli and Euler polynomials. In the context of
power sums the results transfer to the classical Faulhaber polynomials.
\end{abstract}

\maketitle

%%%%%%%%%%%%%%%%%%%%%%%%%%%%%%%%%%%%%%%%%%%%%%%%%%%%%%%%%%%%%%%%%%%%%%%%%%%%%%%%
% Section
%%%%%%%%%%%%%%%%%%%%%%%%%%%%%%%%%%%%%%%%%%%%%%%%%%%%%%%%%%%%%%%%%%%%%%%%%%%%%%%%

\section{Introduction}

The Appell sequences, which were introduced by Appell~\cite{Appell:1880} in 1880,
form an important class of polynomials that are applied in numerous fields of
mathematics. Moreover, they are a subset of the wider class of Sheffer sequences,
see Roman~\cite{Roman:1984} and Rota~\cite{Rota:1975}.

Let $(\alpha_n)_{n \geq 0}$ be any sequence of numbers in $\CC$, which defines
the exponential generating functions
\begin{align*}
  \AD(t) &= \sum_{n \geq 0} \alpha_n \frac{t^n}{n!} \\
\shortintertext{and}
  \AD(t) e^{xt} &= \sum_{n \geq 0} \AP_n(x) \frac{t^n}{n!}, \\
\shortintertext{where $(\AP_n(x))_{n \geq 0}$ is called an Appell sequence.
Its members are the Appell polynomials, which satisfy for $n \geq 0$ that}
  \AP_n(x) &= \sum_{\nu=0}^{n} \binom{n}{\nu} \alpha_{n-\nu} \, x^\nu \\
\shortintertext{and}
  \AP_n(0) &= \alpha_n.
\end{align*}

Usually, it is required that $\alpha_0 \neq 0$ to ensure that $\deg \AP_n = n$.
However, we do not need this restriction here.
The Appell polynomials obey the equivalent properties
\begin{alignat}{2}
  \AP'_n(x) &= n \AP_{n-1}(x) &\quad &(n \geq 1), \nonumber \\
  \AP_n^{(\nu)}(x) &= (n)_\nu \, \AP_{n-\nu}(x) &\quad &(n \geq \nu \geq 1),
    \label{eq:ap-deriv} \\
  \AP_n(x+y) &= \sum_{\nu=0}^{n} \binom{n}{\nu} \AP_{n-\nu}(x) \, y^\nu
    &\quad &(n \geq 0), \label{eq:ap-trans}
\end{alignat}
where $(n)_\nu$ denotes the falling factorial.

We consider further results, if the reflection relation
\begin{equation*} \label{eq:ap-ref}
  \AP_n(1-x) = (-1)^n \AP_n(x) \tag{R}
\end{equation*}
additionally holds. Property \eqref{eq:ap-ref} immediately implies that
\begin{alignat*}{2}
  \AP_n(1) &= (-1)^n \alpha_n &\quad &(n \geq 0),
    \label{eq:ap-ref-2} \tag{R$_2$} \\
  \AP_n(\tfrac{1}{2}) &= 0 &\quad &(n \geq 1 \text{ odd}).
    \label{eq:ap-ref-3} \tag{R$_3$}
\end{alignat*}
The apparently weaker properties \eqref{eq:ap-ref-2} and \eqref{eq:ap-ref-3}
actually hold as an equivalence to \eqref{eq:ap-ref}. Indeed, this easily
follows from the translation formula \eqref{eq:ap-trans}. (For the case
\eqref{eq:ap-ref-3}, see Lemma~\ref{lem:ap-ref-h} in Section~\ref{sec:ft-poly}
for a more general result.) Considering \eqref{eq:ap-ref-2} in the case
\mbox{$n=1$}, the first coefficients satisfy that
\begin{equation*} \label{eq:ap-ref-4}
  \alpha_1 = - \tfrac{1}{2} \alpha_0. \tag{R$_4$}
\end{equation*}

For a polynomial $P_n(x) = c_n x^n + \dotsb + c_1 x + c_0$ with generic
coefficients, its \textdef{reciprocal polynomial} is defined by
\[
  P^\star_n(x) = x^n P_n(x^{-1}).
\]
If $P_n(x) = P^\star_n(x)$, then $P_n$ is
called \textdef{self-reciprocal} or \textdef{palindromic}, since the
coefficients of $P_n$ form a palindromic sequence such that $c_\nu = c_{n-\nu}$
for $0 \leq \nu \leq n$. Alternatively, if the condition $c_\nu = -c_{n-\nu}$
(respectively, $\norm{c_\nu} = \norm{c_{n-\nu}}$) holds, then $P_n$ is called
\textdef{anti-palindromic} (respectively, \textdef{quasi-palindromic}).
Generally, it is recommended that $n = \deg P_n$. The definitions here
do not depend on the degree of $P_n$ but rather on the index $n$, since it can
happen that $c_n = 0$ or $c_0 = 0$ when evaluating the generic coefficients,
so $n \neq \deg P_n$ or $n \neq \deg P^\star_n$ in that case, respectively.

The reciprocal Appell polynomials for $n \geq 0$ are given by
\begin{equation} \label{eq:ar-def}
  \AR_n(x) = \sum_{\nu=0}^{n} \binom{n}{\nu} \alpha_\nu \, x^\nu,
\end{equation}
where special values are
\[
  \AR_n(-1) = (-1)^n \AP_n(-1), \quad \AR_n(0) = \alpha_0,
    \andq \AR_n(1) = \AP_n(1).
\]
Moreover, if property \eqref{eq:ap-ref} holds, then we have equivalently that
\begin{equation*} \label{eq:ar-ref}
  \AR_n(1) = (-1)^n \alpha_n \quad (n \geq 0) \andq
  \AR_n(2) = 0 \quad (n \geq 1 \text{ odd}). \tag{R$_5$}
\end{equation*}

For example, the Bernoulli and Euler polynomials $\BN_n(x)$ and $\EN_n(x)$,
respectively, are Appell polynomials with $\alpha_0 = 1$ and satisfy the
reflection relation~\eqref{eq:ap-ref}. Note that $\BN_n = \BN_n(0)$ and
$\EN_n = \ER_n(2)$ are the Bernoulli and Euler numbers, respectively.
Further note that $\BR_n(2) = \BN_n + \frac{1}{2} \GN_n$ and
$\EN_n(0) = \GN_{n+1}/(n+1)$ with the Genocchi numbers $\GN_n = 2( 1 - 2^n ) \BN_n$.
See, e.g., Comtet \cite[Sec.~1.14, pp.~48--49]{Comtet:1974} and
N{\o}rlund \cite[Chap.~2, pp.~17--29]{Norlund:1924}.
Since $\BN_n(x)$ and $\EN_n(x)$ satisfy~\eqref{eq:ap-ref}, property~\eqref{eq:ar-ref}
immediately implies the well-known facts from another point of view that
$\EN_n = 0$ and $\BN_n + \frac{1}{2} \GN_n = 0$ for odd $n \geq 1$.
Due to \mbox{$\BN_1 = -\frac{1}{2}$}, it finally follows that $\BN_n = 0$
for odd $n \geq 3$.

The main aim of the paper is to find relations of the form
\[
  \AP_n(x) = \psi_n(u) \, \FP_n(u),
\]
where $\FP_n(u)$ is a Faulhaber-type polynomial with the quadratic substitution
$u = x(x-1)$ and $\psi_n(u)$ is a suitable factor. We show that this can be
accomplished whenever the reflection relation~\eqref{eq:ap-ref} holds.
Moreover, it turns out that the coefficients of $\FP_n(u)$ are connected in
some way with the reciprocal Appell polynomials $\AR_n(x)$.

We have recently studied in \cite{Kellner:2021} the special case of reciprocal
Bernoulli polynomials in the context of power sums
$S_n(m) = \sum_{\nu=0}^{m-1} \nu^n$ \mbox{$(m, n \geq 0)$}
and the original Faulhaber polynomials $\FF_n(y)$.
About 400 years ago, Johann Faulhaber (1580--1635) first discovered,
among many other relations and without knowledge of the Bernoulli numbers and
polynomials, the remarkable equations
\[
  S_n(m) = \FF_n(y)
\]
for odd $n \in \set{5,7,\ldots,17}$ where \mbox{$y = S_1(m) = m(m-1)/2$},
thereby extending known identities for $n \in \set{1,3}$ from ancient times.
(The case $n$ even needs an extra factor and is omitted here,
see~\cite{Kellner:2021} for historical notes.) Since
\[
  S_n(m) = \int_{0}^{m} \BN_n(x) \, dx,
\]
the connection with Appell polynomials suggests similar but more generalized
results. The case of power sums will then be revisited in the final section
and follow as an application of these results, see Example~\ref{exp:appl}.

The rest of the paper is organized as follows. The next section discusses the
properties of reciprocal and palindromic Appell polynomials.
Section~\ref{sec:umbral} is devoted to umbral calculus.
In Section~\ref{sec:ft-poly} we introduce the Faulhaber-type polynomials by
showing several properties. The final Section~\ref{sec:sym} establishes the
relationship between Appell and Faulhaber-type polynomials with the help of
generalized reciprocal Appell polynomials. This finally results in
Theorem~\ref{thm:fp-main}, giving the main statements at the end following
several notation and definitions.

%%%%%%%%%%%%%%%%%%%%%%%%%%%%%%%%%%%%%%%%%%%%%%%%%%%%%%%%%%%%%%%%%%%%%%%%%%%%%%%%
% Section
%%%%%%%%%%%%%%%%%%%%%%%%%%%%%%%%%%%%%%%%%%%%%%%%%%%%%%%%%%%%%%%%%%%%%%%%%%%%%%%%

\section{Reciprocal and palindromic polynomials}

Define the differential operator
\[
  \dop_t = \frac{d}{dt}.
\]
Let $\coeff{t^n}$ be the linear functional that gives the coefficient of $t^n$
of a formal power series such that
\[
  f(t) = \sum_{n \geq 0} a_n t^n, \quad \coeff{t^n} \, f(t) = a_n.
\]
We extend the definition of a reciprocal polynomial in case of ambiguity.
If we have a polynomial $P_{n,k}(x)$ that depends on two indices, then we
write explicitly, e.g.,
\[
  P_{n,k}(x) = (-1)^n x^k P_{n,k}(x^{-1}) = (-1)^n P^\star_{n,k}(x).
\]

The signed Lah numbers for $n \geq k \geq 1$ are defined by
\[
  \LN_{n,k} = (-1)^n \frac{n!}{k!} \binom{n-1}{k-1},
\]
which were introduced by Lah~\cite{Lah:1954} in 1954
(cf.~Comtet~\cite[Ex.\,2, p.\,156]{Comtet:1974}).
As an application, we have the following lemma for derivatives of composite
functions of a special type.

\begin{lemma}
Let $f \in C^\infty$. For $n \geq 1$, we have
\[
  \dop_x^n \, f(x^{-1}) = \sum_{k=1}^{n} \LN_{n,k} \, x^{-(n+k)} f^{(k)}(x^{-1}).
\]
\end{lemma}

This Lemma is given as an exercise for the use of Fa\`a di Bruno's formula
in \cite[Ex.\,7, p.\,157]{Comtet:1974}. For a short proof using Hoppe's
formula, see \cite{Kellner:2021}. Instantly, we obtain the following result.

\begin{lemma}
For $n \geq \ell \geq 1$, we have
\begin{align*}
  \dop_x^\ell \, \AP_n(x^{-1}) &= \sum_{\nu=1}^{\ell} \LN_{\ell,\nu} \,
    x^{-(\ell+\nu)} (n)_\nu \, \AP_{n-\nu}(x^{-1}).
\end{align*}
\end{lemma}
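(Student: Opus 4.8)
The plan is to derive this directly from the two results already in hand, namely the Lah-number formula for the $n$-th derivative of $f(x^{-1})$ and the Appell derivative property \eqref{eq:ap-deriv}; no new machinery is needed, so the proof is essentially a substitution. The only point requiring a moment's care is a clash of index names: in the preceding lemma the symbol $n$ denotes the \emph{order of differentiation}, whereas here $n$ is the \emph{degree} of the Appell polynomial and the differentiation order is $\ell$. I would state this explicitly at the outset to avoid confusion.

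First I would apply the preceding lemma with the smooth function taken to be $f = \AP_n$ and with its differentiation order set equal to $\ell$. Since $\AP_n \in C^\infty$, this is legitimate, and writing the summation index as $\nu$ it yields
\[
  \dop_x^\ell \, \AP_n(x^{-1}) = \sum_{\nu=1}^{\ell} \LN_{\ell,\nu} \,
    x^{-(\ell+\nu)} \AP_n^{(\nu)}(x^{-1}).
\]
Next I would invoke the Appell derivative property \eqref{eq:ap-deriv}, which gives $\AP_n^{(\nu)}(x) = (n)_\nu \, \AP_{n-\nu}(x)$ and hence, evaluating at $x^{-1}$, that $\AP_n^{(\nu)}(x^{-1}) = (n)_\nu \, \AP_{n-\nu}(x^{-1})$. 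Substituting this into each summand produces exactly the claimed identity.

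The final step is to check that \eqref{eq:ap-deriv} is applicable for every term in the sum, i.e.\ that its hypothesis $n \geq \nu \geq 1$ holds throughout. This is precisely where the standing assumption $n \geq \ell \geq 1$ is used: as $\nu$ ranges over $1 \leq \nu \leq \ell$, the chain $n \geq \ell \geq \nu \geq 1$ guarantees the needed inequality, so the formula \eqref{eq:ap-deriv} is valid term by term and no summand is spurious. I do not anticipate any genuine obstacle here; the entire content is the composition of the two earlier results, and the assumption $n \geq \ell$ is exactly what makes the rewriting legitimate across the full range of $\nu$.
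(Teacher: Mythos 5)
Your proof is correct and is exactly the argument the paper intends: the paper gives no written proof, merely stating that the lemma follows ``instantly'' from the preceding Lah-number lemma combined with the Appell derivative property \eqref{eq:ap-deriv}, which is precisely your substitution $f = \AP_n$ with differentiation order $\ell$ followed by $\AP_n^{(\nu)}(x^{-1}) = (n)_\nu \, \AP_{n-\nu}(x^{-1})$. Your explicit check that $n \geq \ell \geq \nu \geq 1$ justifies each term is a sensible addition, not a deviation.
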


To go further, we define the \textdef{generalized} reciprocal Appell polynomials as
\begin{equation} \label{eq:ag-def}
  \AG_{n,k}(x) = x^k \AP_n(x^{-1}) = x^{k-n} \AR_n(x)
    \quad (n \geq 0, \, k \in \ZZ),
\end{equation}
where
\begin{align}
  \AG_{n,k}(0) &= \begin{cases}
    \infty, & \text{if } k < n, \\
    \alpha_0, & \text{if } k = n, \\
    0, & \text{if } k > n.
  \end{cases} \nonumber
\shortintertext{If the reflection relation \eqref{eq:ap-ref} holds, then}
  \AG_{n,k}(1) &= (-1)^n \alpha_n \quad (n \geq 0, \, k \in \ZZ).
    \label{eq:ag-val-1}
\end{align}

\begin{theorem} \label{thm:ag-deriv}
For $n, k \geq \ell \geq 0$, we have
\begin{align}
  \dop_x^\ell \, \AG_{n,k}(x) &= \ell! \sum_{\nu=0}^{\ell} (-1)^\nu
    \binom{k-\nu}{k-\ell} \binom{n}{\nu} \AG_{n-\nu,k-\ell-\nu}(x).
  \label{eq:ag-deriv} \\
\shortintertext{In particular,}
  \AG'_{n,k}(x) &= k \AG_{n,k-1}(x) - n \AG_{n-1,k-2}(x). \nonumber \\
\shortintertext{If property \eqref{eq:ap-ref} holds, then}
  \AG_{n,k}^{(\ell)}(1) &= (-1)^n \ell! \sum_{\nu=0}^{\ell}
    \binom{k-\nu}{k-\ell} \binom{n}{\nu} \alpha_{n-\nu}.
  \label{eq:ag-deriv-1}
\end{align}
\end{theorem}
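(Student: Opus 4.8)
The plan is to prove the differentiation formula \eqref{eq:ag-deriv} by induction on $\ell$, using as the engine the single-derivative recursion that also constitutes the ``in particular'' claim. First I would establish that recursion directly from the definition $\AG_{n,k}(x) = x^k \AP_n(x^{-1})$ in \eqref{eq:ag-def}: the product and chain rules give $\AG'_{n,k}(x) = k x^{k-1} \AP_n(x^{-1}) - x^{k-2} \AP'_n(x^{-1})$, and the Appell property \eqref{eq:ap-deriv} in the form $\AP'_n = n \AP_{n-1}$ turns the second term into $-n x^{k-2} \AP_{n-1}(x^{-1})$. Re-reading both pieces through \eqref{eq:ag-def} yields exactly $\AG'_{n,k}(x) = k \AG_{n,k-1}(x) - n \AG_{n-1,k-2}(x)$, which is simultaneously the $\ell = 1$ instance of \eqref{eq:ag-deriv} and the base mechanism for the induction.

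For the inductive step, passing from level $\ell$ to level $\ell+1$ (where now $n, k \geq \ell+1$), I would differentiate the assumed formula summand by summand, applying the single-derivative recursion to each $\AG_{n-\nu,k-\ell-\nu}(x)$. This splits every term into one proportional to $\AG_{n-\nu,k-\ell-1-\nu}$ and one to $\AG_{n-\nu-1,k-\ell-2-\nu}$. Reindexing the first family by $\mu = \nu$ and the second by $\mu = \nu+1$ aligns both against the target summand $\AG_{n-\mu,k-(\ell+1)-\mu}$. Collecting coefficients, and using the absorption identity $\binom{n}{\mu-1}(n-\mu+1) = \mu \binom{n}{\mu}$ to fuse the two $\binom{n}{\cdot}$ factors, reduces the entire step to the purely binomial identity
\[
  \binom{k-\mu}{k-\ell}(k-\ell-\mu) + \mu \binom{k-\mu+1}{k-\ell}
    = (\ell+1) \binom{k-\mu}{k-\ell-1}.
\]
I would prove this by applying Pascal's rule to $\binom{k-\mu+1}{k-\ell}$, which merges the $(k-\ell-\mu) + \mu$ contribution into $(k-\ell)\binom{k-\mu}{k-\ell}$, followed by the single absorption step $(k-\ell)\binom{k-\mu}{k-\ell} = (\ell-\mu+1)\binom{k-\mu}{k-\ell-1}$, producing the factor $(\ell-\mu+1) + \mu = \ell+1$. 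The one piece of bookkeeping to watch is the boundaries $\mu = 0$ and $\mu = \ell+1$, where only one family contributes; these cost nothing, since the absent term carries a vanishing coefficient ($\mu \binom{n}{\mu} = 0$ at $\mu = 0$, and $\binom{k-\mu}{k-\ell} = \binom{k-\ell-1}{k-\ell} = 0$ at $\mu = \ell+1$ because $k \geq \ell+1$), so the displayed identity holds verbatim over the full range $0 \leq \mu \leq \ell+1$. The hypotheses $n, k \geq \ell$ keep every index in range, and any summand with $\nu > n$ is annihilated by $\binom{n}{\nu} = 0$, so no undefined $\AG$ with negative first index is ever genuinely invoked.

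Finally, the evaluation \eqref{eq:ag-deriv-1} follows by setting $x = 1$ in \eqref{eq:ag-deriv} and inserting the value $\AG_{n-\nu,k-\ell-\nu}(1) = (-1)^{n-\nu}\alpha_{n-\nu}$ furnished by \eqref{eq:ag-val-1} under the reflection relation \eqref{eq:ap-ref}: the sign $(-1)^\nu$ already present combines with $(-1)^{n-\nu}$ into the constant $(-1)^n$, which factors out of the sum and leaves precisely the asserted expression. I expect no real obstacle; the only delicate point is the index shift in the inductive reindexing, which the vanishing-binomial behaviour at the boundaries renders painless.
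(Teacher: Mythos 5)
Your proof is correct and follows essentially the same route as the paper's: the explicit $\ell=1$ computation from the definition, induction on $\ell$ with the two reindexed families fused via the absorption identity $\binom{n}{\mu-1}(n-\mu+1)=\mu\binom{n}{\mu}$, boundary terms killed by vanishing binomials, and evaluation at $x=1$ via \eqref{eq:ag-val-1}. The only (immaterial) difference is that you verify the key binomial identity by Pascal's rule plus one absorption step, where the paper simplifies the combined numerator directly.
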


\begin{proof}
We first consider \eqref{eq:ag-deriv}, where the case \mbox{$\ell=0$} is trivial.
The simple rule of the first derivative follows from
\[
  \AG'_{n,k}(x) = k x^{k-1} \AP_n(x^{-1}) - n x^k \AP_{n-1}(x^{-1}) / x^2,
\]
which coincides with \eqref{eq:ag-deriv} in the case $\ell = 1$.
Now we use induction on $\ell$. Let $n, k \geq \ell \geq 2$.
We assume that \eqref{eq:ag-deriv} holds for $\ell-1$ and prove for $\ell$.
The derivative of \eqref{eq:ag-deriv} in the case $\ell-1$ provides
\begin{align*}
  \dop_x^\ell \, \AG_{n,k}(x) = \; &(\ell-1)! \sum_{\nu=0}^{\ell-1} (-1)^\nu
    \binom{k-\nu}{k-\ell+1} \binom{n}{\nu} \; \times \\
  & \big( (k-\ell-\nu+1) \AG_{n-\nu,k-\ell-\nu}(x)
    -(n-\nu) \AG_{n-\nu-1,k-\ell-\nu-1}(x) \big).
\end{align*}

We split the sum into two parts as follows:
\[
  \frac{1}{(\ell-1)!} \dop_x^\ell \, \AG_{n,k}(x) = S_1 + S_2,
\]
where
\begin{align*}
  S_1 &= \sum_{\nu=0}^{\ell-1} (-1)^\nu
    \binom{k-\nu}{k-\ell+1} \binom{n}{\nu} (k-\ell-\nu+1)
    \AG_{n-\nu,k-\ell-\nu}(x), \\
\shortintertext{and by shifting the index}
  S_2 &= \sum_{\nu=1}^{\ell} (-1)^\nu
    \binom{k-\nu+1}{k-\ell+1} \binom{n}{\nu-1} (n-\nu+1)
    \AG_{n-\nu,k-\ell-\nu}(x).
\end{align*}
Note that $\binom{n}{\nu-1} (n-\nu+1) = \binom{n}{\nu} \nu$.
We evaluate the coefficients of $S_1+S_2$ in the following cases.
Comparing with \eqref{eq:ag-deriv}, we can neglect the coincident factors
$(\ell-1)!$ and $(-1)^\nu \binom{n}{\nu}$.

\Case{$\nu=0$} We have $c_0 = \binom{k}{k-\ell+1} (k-\ell+1) = \binom{k}{k-\ell} \ell$.

\Case{$\nu=\ell$} We have $c_\ell = \binom{k-\ell+1}{k-\ell+1} \ell = \ell$.

\Case{$\nu=1,\ldots,\ell-1$} It follows that
\begin{align*}
 c_\nu &= \binom{k-\nu}{k-\ell+1} (k-\ell-\nu+1) + \binom{k-\nu+1}{k-\ell+1} \nu \\
   &= \binom{k-\nu}{k-\ell} \frac{(\ell-\nu)(k-\ell-\nu+1)+(k-\nu+1)\nu}{k-\ell+1}
   = \binom{k-\nu}{k-\ell} \ell.
\end{align*}

Together with the omitted factors, the coefficients $c_\nu$ coincide with
\eqref{eq:ag-deriv} in all cases. This shows the induction. Additionally, if
property \eqref{eq:ap-ref} holds, then formula \eqref{eq:ag-deriv-1} follows
from \eqref{eq:ag-deriv} by using \eqref{eq:ag-val-1}. This completes the proof.
\end{proof}

A direct proof of Theorem~\ref{thm:ag-deriv} without using induction was given
in \cite{Kellner:2021} in the context of Bernoulli polynomials,
which shows the origin of formula \eqref{eq:ag-deriv}.
By \eqref{eq:ag-def}, we obtain a similar result for $\AR_n(x)$ as follows.

\begin{corollary} \label{cor:ar-deriv}
For $n \geq \ell \geq 0$, we have
\begin{align*}
  \frac{x^\ell}{\ell!} \, \dop_x^\ell \, \AR_n(x)
    &= \sum_{\nu=0}^{\ell} (-1)^\nu
    \binom{n-\nu}{n-\ell} \binom{n}{\nu} \AR_{n-\nu}(x).
\shortintertext{In particular,}
  x \AR_n(x)' &= n \, ( \AR_n(x) - \AR_{n-1}(x) ).
\shortintertext{If property \eqref{eq:ap-ref} holds, then}
  {\AR_n}^{\!\!(\ell)}(1) &= (-1)^n \ell! \sum_{\nu=0}^{\ell}
    \binom{n-\nu}{n-\ell} \binom{n}{\nu} \alpha_{n-\nu}.
\end{align*}
\end{corollary}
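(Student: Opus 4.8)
The plan is to derive everything by specializing Theorem~\ref{thm:ag-deriv} to the diagonal $k = n$. The starting point is the observation, immediate from \eqref{eq:ag-def}, that $\AG_{n,n}(x) = x^{n-n}\AR_n(x) = \AR_n(x)$; thus $\AR_n$ is precisely the diagonal member of the two-parameter family, and its derivatives are governed by \eqref{eq:ag-deriv}. Setting $k = n$ there (which is consistent with the hypothesis $n,k \ge \ell \ge 0$, now reading $n \ge \ell \ge 0$) yields $\dop_x^\ell \AR_n(x) = \ell! \sum_{\nu=0}^{\ell} (-1)^\nu \binom{n-\nu}{n-\ell}\binom{n}{\nu} \AG_{n-\nu,\,n-\ell-\nu}(x)$.

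The key step is then to convert each $\AG$-term back into an $\AR$-term. Using \eqref{eq:ag-def} once more, $\AG_{n-\nu,\,n-\ell-\nu}(x) = x^{(n-\ell-\nu)-(n-\nu)}\AR_{n-\nu}(x) = x^{-\ell}\AR_{n-\nu}(x)$. The point worth flagging is that the exponent $(n-\ell-\nu)-(n-\nu) = -\ell$ is independent of $\nu$, so the same power $x^{-\ell}$ factors out of every summand uniformly. Multiplying the displayed identity by $x^\ell/\ell!$ then cancels this factor and produces the first formula of the corollary. The special case follows by taking $\ell = 1$: the $\nu = 0$ term contributes $\binom{n}{n-1}\AR_n(x) = n\AR_n(x)$ and the $\nu = 1$ term contributes $-\binom{n-1}{n-1}\binom{n}{1}\AR_{n-1}(x) = -n\AR_{n-1}(x)$, giving $x\AR_n'(x) = n(\AR_n(x) - \AR_{n-1}(x))$.

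For the final identity I would evaluate the first formula at $x = 1$. Under \eqref{eq:ap-ref} we have $\AR_{m}(1) = (-1)^{m}\alpha_{m}$ by \eqref{eq:ar-ref}, so $\AR_{n-\nu}(1) = (-1)^{n-\nu}\alpha_{n-\nu}$. Substituting and using $(-1)^\nu(-1)^{n-\nu} = (-1)^n$ to pull the common sign out of the sum gives $\AR_n^{(\ell)}(1) = (-1)^n \ell! \sum_{\nu=0}^{\ell} \binom{n-\nu}{n-\ell}\binom{n}{\nu}\alpha_{n-\nu}$, as claimed. There is no genuine obstacle here, since the whole argument is a specialization of the already-proved Theorem~\ref{thm:ag-deriv}; the only points demanding care are checking that the exponent of $x$ is constant across the sum (which is what legitimizes factoring out $x^{\ell}$) and tracking the sign bookkeeping in the last step.
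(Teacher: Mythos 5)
Your proof is correct and matches the paper's intent exactly: the paper introduces this corollary with the remark that it follows from Theorem~\ref{thm:ag-deriv} via \eqref{eq:ag-def}, which is precisely your specialization $k=n$ (using $\AG_{n,n}(x)=\AR_n(x)$ and $\AG_{n-\nu,n-\ell-\nu}(x)=x^{-\ell}\AR_{n-\nu}(x)$). Your observation that the exponent $-\ell$ is independent of $\nu$, and the sign bookkeeping at $x=1$ via \eqref{eq:ar-ref}, are exactly the details that make this specialization work.
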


Define for $n \geq 0$ the related functions to $\AR_n(x)$ by
\[
  \AS_n(x) = \sum_{k=0}^{n} \ac_{n,k} \, x^k
\]
with the coefficients
\[
  \ac_{n,k} = \sum_{\nu = k}^{n} \binom{n}{\nu} \binom{\nu}{k} \alpha_\nu
    \quad (0 \leq k \leq n).
\]

The reflection relation \eqref{eq:ap-ref} implies that self-reciprocal
polynomials exist.

\begin{theorem} \label{thm:as-palin}
For $n \geq 0$, we have
\begin{align*}
  \AS_n(x) &= \AR_n(x+1). \\
\shortintertext{If property \eqref{eq:ap-ref} holds, then
the polynomials $\AS_n(x)$ are quasi-palindromic such that}
  \AS_n(x) &= (-1)^n \ASR_n(x)
\shortintertext{and for $0 \leq k \leq n$ that}
  \ac_{n,k} &= (-1)^n \ac_{n,n-k}.
\end{align*}
\end{theorem}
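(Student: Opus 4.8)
The plan is to establish the three assertions in turn, obtaining the palindromy statements as consequences of the first identity together with the reflection relation \eqref{eq:ap-ref}.

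First I would prove $\AS_n(x) = \AR_n(x+1)$ by a direct reindexing. Starting from \eqref{eq:ar-def} and expanding each power of $x+1$ binomially,
\[
  \AR_n(x+1) = \sum_{\nu=0}^{n} \binom{n}{\nu} \alpha_\nu (x+1)^\nu
    = \sum_{\nu=0}^{n} \binom{n}{\nu} \alpha_\nu
      \sum_{k=0}^{\nu} \binom{\nu}{k} x^k,
\]
I would interchange the order of summation and collect the coefficient of $x^k$ for $0 \le k \le n$. This coefficient is exactly $\sum_{\nu=k}^{n} \binom{n}{\nu}\binom{\nu}{k}\alpha_\nu = \ac_{n,k}$, so that $\AR_n(x+1) = \sum_k \ac_{n,k}\,x^k = \AS_n(x)$. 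This step is routine bookkeeping.

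Next, assuming \eqref{eq:ap-ref}, I would prove the palindromic relation $\AS_n(x) = (-1)^n \ASR_n(x)$. The key tool is the identity $\AR_n(y) = y^n \AP_n(y^{-1})$, which is the case $k=n$ of \eqref{eq:ag-def}. Using it on both sides, the first assertion gives $\AS_n(x) = \AR_n(x+1) = (x+1)^n \AP_n\!\bigl(\tfrac{1}{x+1}\bigr)$, while writing $x^{-1}+1 = (1+x)/x$ gives $\ASR_n(x) = x^n \AR_n(x^{-1}+1) = (1+x)^n \AP_n\!\bigl(\tfrac{x}{1+x}\bigr)$. Cancelling the common factor $(1+x)^n$, the claimed identity reduces to
\[
  \AP_n\!\Bigl(\tfrac{1}{1+x}\Bigr) = (-1)^n \AP_n\!\Bigl(\tfrac{x}{1+x}\Bigr).
\]
The crucial final step is to notice that $\tfrac{x}{1+x} = 1 - \tfrac{1}{1+x}$, so that with $z = \tfrac{1}{1+x}$ this is exactly the reflection relation $\AP_n(1-z) = (-1)^n \AP_n(z)$ of \eqref{eq:ap-ref}. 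This recognition is the heart of the argument and the step most likely to need care; everything around it is algebraic rearrangement.

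Finally, the coefficient symmetry $\ac_{n,k} = (-1)^n \ac_{n,n-k}$ follows by comparing coefficients. Since $\ASR_n(x) = x^n \AS_n(x^{-1}) = \sum_{k=0}^{n} \ac_{n,k}\,x^{n-k}$, the coefficient of $x^k$ in $\ASR_n(x)$ is $\ac_{n,n-k}$, whereas in $\AS_n(x)$ it is $\ac_{n,k}$; substituting into $\AS_n(x) = (-1)^n \ASR_n(x)$ yields the relation at once. This gives $\norm{\ac_{n,k}} = \norm{\ac_{n,n-k}}$, so $\AS_n$ is quasi-palindromic (palindromic for even $n$, anti-palindromic for odd $n$), which completes the proof.
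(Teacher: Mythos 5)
Your proof is correct, but the key step is carried out by a genuinely different method than the paper's. For the identity $\AS_n(x)=\AR_n(x+1)$ both arguments amount to the same bookkeeping (you expand $(x+1)^\nu$ binomially; the paper reads off the Taylor coefficients $\frac{1}{k!}{\AR_n}^{\!\!(k)}(1)=\ac_{n,k}$). The difference is in the palindromy statement. The paper derives the coefficient symmetry $\ac_{n,k}=(-1)^n\ac_{n,n-k}$ first, by computing $\frac{1}{k!}{\AR_n}^{\!\!(k)}(1)$ a second way using Corollary~\ref{cor:ar-deriv} together with \eqref{eq:ag-def} and \eqref{eq:ag-val-1}; that corollary rests on the derivative formula of Theorem~\ref{thm:ag-deriv}, proved by induction. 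You instead prove the polynomial identity $\AS_n(x)=(-1)^n\ASR_n(x)$ directly: writing, via the case $k=n$ of \eqref{eq:ag-def},
\[
  \AS_n(x)=(1+x)^n\,\AP_n\!\mleft(\tfrac{1}{1+x}\mright)
  \andq
  \ASR_n(x)=(1+x)^n\,\AP_n\!\mleft(\tfrac{x}{1+x}\mright),
\]
and observing that $\tfrac{x}{1+x}=1-\tfrac{1}{1+x}$, so the claim reduces precisely to the reflection relation \eqref{eq:ap-ref}; the coefficient symmetry then drops out by comparing coefficients (the reverse logical order from the paper). Your route is more elementary and self-contained: it needs only the definition \eqref{eq:ag-def} and \eqref{eq:ap-ref}, bypassing the Lah-number/derivative machinery entirely, and it makes transparent \emph{why} palindromy is equivalent to the reflection relation (the substitution $z\mapsto 1-z$ becomes coefficient reversal after the M\"obius change of variable). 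What the paper's route buys is economy within its own architecture: Theorem~\ref{thm:ag-deriv} and Corollary~\ref{cor:ar-deriv} are needed later anyway (e.g.\ for \eqref{eq:ag-deriv-1}, Theorem~\ref{thm:as-palin-2}, and Theorem~\ref{thm:ag-recur}), so the symmetry there comes essentially for free and keeps the derivative values ${\AR_n}^{\!\!(k)}(1)$ in play, which reappear as the coefficients \eqref{eq:as-coeff}. One small point of care in your argument: the cancellation of $(1+x)^n$ is an identity of rational functions, valid since both $\AS_n(x)$ and $\ASR_n(x)$ are polynomials agreeing at infinitely many points; this is harmless but worth saying.
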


\begin{proof}
Let $n \geq 0$ and $0 \leq k \leq n$. From \eqref{eq:ar-def} we derive that
\[
  \frac{1}{k!} {\AR_n}^{\!\!(k)}(1) = \sum_{\nu = k}^{n} \binom{n}{\nu}
    \binom{\nu}{k} \alpha_\nu = \ac_{n,k},
\]
which implies that $\AS_n(x) = \AR_n(x+1)$. Now, we assume that property
\eqref{eq:ap-ref} holds. From Corollary~\ref{cor:ar-deriv} and relations
\eqref{eq:ag-def} and \eqref{eq:ag-val-1}, we then infer that
\[
  \frac{1}{k!} {\AR_n}^{\!\!(k)}(1) = (-1)^n \sum_{\nu=0}^{k}
    \binom{n}{\nu} \binom{n-\nu}{n-k} \alpha_{n-\nu}
    = (-1)^n \ac_{n,n-k}.
\]
Thus, $\ac_{n,k} = (-1)^n \ac_{n,n-k}$ for $0 \leq k \leq n$, which implies
that $\AS_n(x) = (-1)^n \ASR_n(x)$.
\end{proof}

To generalize the results, we define the formal power series
\begin{equation} \label{eq:as-gen}
  \AS_{n,k}(x) = \AS_n(x) \, (x+1)^{k-n}
    = \AS_n(x) \sum_{\nu \geq 0} \binom{k-n}{\nu} x^\nu
    \quad (n \geq 0, \, k \in \ZZ).
\end{equation}

\begin{theorem} \label{thm:as-palin-2}
For $n \geq 0$ and $k \in \ZZ$, we have
\[
  \AS_{n,k}(x) = \AG_{n,k}(x+1),
\]
where we require \mbox{$\norm{x} < 1$} for convergence in case \mbox{$k < n$}.
If \mbox{$k \geq n$}, then $\AS_{n,k}(x)$ is a poly\-nomial.
In particular, we have for $n, k \geq \ell \geq 0$ that
\begin{equation} \label{eq:as-coeff}
  \coeff{x^\ell} \, \AS_{n,k}(x) = \frac{1}{\ell!} \AG_{n,k}^{(\ell)}(1)
    = \sum_{\nu=0}^{\ell} \binom{k-n}{\ell-\nu} \ac_{n,\nu}.
\end{equation}

If property~\eqref{eq:ap-ref} holds, then $\AS_{n,k}(x)$ is for
$k \geq n \geq 0$ quasi-palindromic such that
\[
  \AS_{n,k}(x) = (-1)^n x^k \AS_{n,k}(x^{-1}) = (-1)^n \ASR_{n,k}(x).
\]
In case $n$ odd and $k$ even, the central coefficient of $\AS_{n,k}(x)$
vanishes such that
\[
  \coeff{x^{k/2}} \, \AS_{n,k}(x) = 0.
\]
\end{theorem}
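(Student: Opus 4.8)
The plan is to reduce the entire theorem to the single identity $\AS_{n,k}(x) = \AG_{n,k}(x+1)$ and then read off each assertion from it. To establish this identity I would start from the factored form $\AG_{n,k}(y) = y^{k-n}\AR_n(y)$ in \eqref{eq:ag-def} and substitute $y = x+1$, giving $\AG_{n,k}(x+1) = (x+1)^{k-n}\AR_n(x+1)$. Since $\AR_n(x+1) = \AS_n(x)$ by Theorem~\ref{thm:as-palin}, this is exactly the defining expression \eqref{eq:as-gen} for $\AS_{n,k}(x)$. The polynomial/convergence dichotomy is then automatic: the factor $(x+1)^{k-n}$ is an honest polynomial when $k \geq n$, whereas for $k < n$ its binomial expansion $\sum_{\nu \geq 0}\binom{k-n}{\nu}x^\nu$ converges only for $\norm{x} < 1$.

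For the coefficient formula I would argue in two steps. The middle equality $\coeff{x^\ell}\AS_{n,k}(x) = \tfrac{1}{\ell!}\AG_{n,k}^{(\ell)}(1)$ is just Taylor's theorem: $\AS_{n,k}(x) = \AG_{n,k}(x+1)$ is by definition the Taylor expansion of $\AG_{n,k}$ about the point $1$, so its $x^\ell$-coefficient equals the $\ell$-th normalized derivative there. The right-hand expression then comes from expanding the product in \eqref{eq:as-gen} directly: writing $\AS_n(x) = \sum_\nu \ac_{n,\nu}x^\nu$ and forming the Cauchy product with $\sum_j \binom{k-n}{j}x^j$ yields $\coeff{x^\ell}\AS_{n,k}(x) = \sum_{\nu=0}^{\ell}\binom{k-n}{\ell-\nu}\ac_{n,\nu}$, where the hypothesis $\ell \leq n$ keeps every index in range.

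The heart of the theorem is the quasi-palindromic property under \eqref{eq:ap-ref}, and this is the one step with genuine content. I would prove it from $\AS_{n,k}(x) = (x+1)^k\AP_n((x+1)^{-1})$, using the form $\AG_{n,k}(y) = y^k\AP_n(y^{-1})$. Computing the reciprocal, $x^k\AS_{n,k}(x^{-1}) = x^k\AG_{n,k}(x^{-1}+1)$, and simplifying $x^{-1}+1 = (1+x)/x$ collapses the powers of $x$ and gives $x^k\AS_{n,k}(x^{-1}) = (1+x)^k\AP_n(x/(1+x))$. The key observation is that $x/(1+x) = 1 - 1/(1+x)$, so the reflection relation \eqref{eq:ap-ref} applies and yields $\AP_n(x/(1+x)) = (-1)^n\AP_n(1/(1+x))$. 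Substituting back gives $x^k\AS_{n,k}(x^{-1}) = (-1)^n(1+x)^k\AP_n(1/(1+x)) = (-1)^n\AS_{n,k}(x)$, which is precisely $\AS_{n,k}(x) = (-1)^n\ASR_{n,k}(x)$ in the two-index reciprocal notation; the restriction $k \geq n$ guarantees that $\AS_{n,k}$ is a genuine polynomial of index $k$, so that its reciprocal is well defined.

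Finally, the vanishing central coefficient drops out of quasi-palindromy. Writing $\AS_{n,k}(x) = \sum_\nu b_\nu x^\nu$, the relation $\AS_{n,k}(x) = (-1)^n x^k\AS_{n,k}(x^{-1})$ forces $b_\nu = (-1)^n b_{k-\nu}$. For $n$ odd this is $b_\nu = -b_{k-\nu}$, and setting $\nu = k/2$, an integer since $k$ is even, gives $b_{k/2} = -b_{k/2}$, hence $\coeff{x^{k/2}}\AS_{n,k}(x) = 0$. I expect no real obstacle beyond the fractional-linear bookkeeping in the third paragraph; the only non-mechanical idea is recognizing $x/(1+x) = 1-1/(1+x)$ so that \eqref{eq:ap-ref} can be invoked.
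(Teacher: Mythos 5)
Your proof is correct, and on the central claim (quasi-palindromy) it takes a genuinely different route from the paper. The first, second, and fourth parts match the paper essentially step for step: the identity $\AS_{n,k}(x)=\AG_{n,k}(x+1)$ via Theorem~\ref{thm:as-palin} and \eqref{eq:ag-def}, the coefficient formula \eqref{eq:as-coeff} by extracting coefficients of the product $\AS_n(x)(x+1)^{k-n}$ (the paper differentiates by Leibniz's rule and evaluates at $x=0$; your Taylor-plus-Cauchy-product argument is the same computation in different clothing), and the vanishing central coefficient from $b_\nu=-b_{k-\nu}$. The difference is the symmetry step. The paper stays at the level of the factorization $\AS_{n,k}(x)=\AS_n(x)(x+1)^{k-n}$: it invokes the quasi-palindromy of $\AS_n(x)$ already proved in Theorem~\ref{thm:as-palin} (which in turn rests on the derivative machinery of Theorem~\ref{thm:ag-deriv} and Corollary~\ref{cor:ar-deriv}) and multiplies by the palindromic factor $(x+1)^{k-n}$. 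You instead go back to the Appell polynomial itself, writing $x^k\AS_{n,k}(x^{-1})=(1+x)^k\,\AP_n\bigl(x/(1+x)\bigr)$ and applying the reflection relation \eqref{eq:ap-ref} through the observation $x/(1+x)=1-1/(1+x)$. This bypasses the symmetry half of Theorem~\ref{thm:as-palin} entirely; indeed, specializing your argument to $k=n$ re-proves that statement, so your route is more self-contained and exposes the quasi-palindromy as nothing more than \eqref{eq:ap-ref} conjugated by the substitution $x\mapsto 1/(1+x)$. What the paper's route buys in exchange is direct reuse of the established coefficient symmetry $\ac_{n,k}=(-1)^n\ac_{n,n-k}$ and an argument that extends verbatim to any product of a quasi-palindromic with a palindromic factor. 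One small point of care in your version: the intermediate expressions are rational functions, so the chain should be read as an identity of rational functions (valid for $x\neq 0,-1$), which then forces the claimed polynomial identity since, for $k\geq n$, both $\AS_{n,k}(x)$ and $x^k\AS_{n,k}(x^{-1})$ are polynomials (as $\deg\AS_{n,k}\leq k$); this is implicit in your write-up but harmless.
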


\begin{proof}
Let $n \geq 0$ and $k \in \ZZ$. Set $c = k-n$. By Theorem~\ref{thm:as-palin}
and definitions \eqref{eq:ag-def} and \eqref{eq:as-gen}, we have
$\AS_{n,k}(x) = \AG_{n,k}(x+1)$. If $c < 0$, then \eqref{eq:as-gen} only holds
for $\norm{x} < 1$, while $\AS_n(x) (x+1)^c$ is a polynomial for $c \geq 0$.

Next we show \eqref{eq:as-coeff}. Let $n, k \geq \ell \geq 0$.
By Leibniz's rule we obtain
\[
  \dop_x^\ell \, \AS_{n,k}(x+1) = \sum_{\nu=0}^{\ell} \binom{\ell}{\nu}
    \big( \AS_n(x) \big)^{(\nu)} \big( (x+1)^c \big)^{(\ell-\nu)}.
\]
We evaluate the equation at $x=0$, which implies \eqref{eq:as-coeff}.

Now, we assume property~\eqref{eq:ap-ref}. Let $k \geq n \geq 0$, so $c \geq 0$.
By Theorem~\ref{thm:as-palin}
$\AS_n(x)$ is quasi-palindromic, so $\AS_{n,k}(x) = \AS_n(x) \, (x+1)^c$ as
a product is also quasi-palindromic, since $(x+1)^c$ is palindromic.
More precisely,
\begin{align*}
  \AS_{n,k}(x) = \AS_n(x) (x+1)^c &= (-1)^n x^n \AS_n(x^{-1}) \, x^c (x^{-1}+1)^c \\
    &= (-1)^n x^k \AS_{n,k}(x^{-1}) = (-1)^n \ASR_{n,k}(x).
\end{align*}
If $n$ is odd and $k$ is even, then
$\coeff{x^{k/2}} \, \AS_{n,k}(x) = - \coeff{x^{k/2}} \, \AS_{n,k}(x)$,
hence the central coefficient vanishes. This completes the proof.
\end{proof}

%%%%%%%%%%%%%%%%%%%%%%%%%%%%%%%%%%%%%%%%%%%%%%%%%%%%%%%%%%%%%%%%%%%%%%%%%%%%%%%%
% Section
%%%%%%%%%%%%%%%%%%%%%%%%%%%%%%%%%%%%%%%%%%%%%%%%%%%%%%%%%%%%%%%%%%%%%%%%%%%%%%%%

\section{Umbral calculus}
\label{sec:umbral}

We use the umbral notation $\mathfrak{u}^n = \mathfrak{u}_n$ for the symbols
$\AP$ and $\alpha$. The formulas of the introduction then turn into the
following relations:
\begin{align*}
  \AD(t) &= e^{\alpha t}, \\
  e^{(x+\alpha) t} &= \sum_{n \geq 0} \AP_n(x) \frac{t^n}{n!}, \\
  \AP_n(x) &= (x + \alpha)^n, \\
  \AR_n(x) &= (\alpha x + 1)^n.
\end{align*}

Define the polynomials
\begin{equation} \label{eq:ap-rs}
  \AP_{r,s}(x) = (\AP(x)+1)^r \AP(x)^s
    = \sum_{\nu=0}^{r} \binom{r}{\nu} \AP_{s+\nu}(x)
\end{equation}
with rank $r \geq 0$ and shift $s \geq 0$. The recurrence of the binomial
coefficients implies that
\begin{align*}
  \AP_{r+1,s}(x) &= \AP_{r,s}(x) + \AP_{r,s+1}(x). \\
\intertext{By induction this can be generalized to the relations}
  \AP_{r+n,s}(x) &= \sum_{\nu=0}^{n} \binom{n}{\nu} \AP_{r,s+\nu}(x), \\
  \AP_{r,s+n}(x) &= \sum_{\nu=0}^{n} \binom{n}{\nu} (-1)^{n-\nu} \AP_{r+\nu,s}(x).
\end{align*}

\begin{theorem}
If property \eqref{eq:ap-ref} holds, then we have for $r,s \geq 0$
the reciprocity relation
\begin{equation} \label{eq:ap-recip}
  (-1)^r \AP_{r,s}(x) = (-1)^s \AP_{s,r}(-x).
\end{equation}
\end{theorem}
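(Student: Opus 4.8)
The plan is to work entirely within the umbral calculus of this section. Writing the defining relation \eqref{eq:ap-rs} with the dictionary $\AP(x) = x+\alpha$ and $\AP(x)^n = \AP_n(x) = (x+\alpha)^n$, I would record the two umbral expressions
\[
  \AP_{r,s}(x) = (x+\alpha+1)^r (x+\alpha)^s, \qquad
  \AP_{s,r}(-x) = (-x+\alpha+1)^s (-x+\alpha)^r,
\]
where each is to be expanded as a polynomial in the single umbra $\alpha$ and then evaluated via $\alpha^n = \alpha_n$. The goal is to pass from the first to the second by one substitution in $\alpha$.

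The decisive ingredient is the umbral form of the reflection relation \eqref{eq:ap-ref}. Under the same dictionary, \eqref{eq:ap-ref} reads $(1-x+\alpha)^n = (-x-\alpha)^n$; specializing $x = 0$ gives $(1+\alpha)^n = (-\alpha)^n$ for every $n \geq 0$. Since the umbral evaluation $\alpha^n \mapsto \alpha_n$ is linear, this identity of monomials propagates to arbitrary polynomials in $\alpha$ and is therefore equivalent to the assertion that $\alpha$ may be replaced by $-1-\alpha$ throughout, an involution leaving every umbral value unchanged. This linearity step is the heart of the argument and also the point requiring care: the replacement must be carried out on the fully expanded polynomial in the one umbra $\alpha$, and not factor-by-factor, since the factors $x+\alpha+1$ and $x+\alpha$ share the same $\alpha$.

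Granting this, the remainder is bookkeeping. Substituting $\alpha \mapsto -1-\alpha$ in $(x+\alpha+1)^r(x+\alpha)^s$ turns the factors into $x-\alpha$ and $x-1-\alpha$; extracting signs via $x-\alpha = -((-x)+\alpha)$ and $x-1-\alpha = -((-x)+\alpha+1)$ yields $(-1)^{r+s}\bigl((-x)+\alpha+1\bigr)^s\bigl((-x)+\alpha\bigr)^r = (-1)^{r+s}\AP_{s,r}(-x)$. Hence $\AP_{r,s}(x) = (-1)^{r+s}\AP_{s,r}(-x)$, and multiplying by $(-1)^r$ gives the stated reciprocity \eqref{eq:ap-recip}. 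I expect the only genuine obstacle to be the justification of the substitution in the second paragraph; the sign accounting here is then immediate.
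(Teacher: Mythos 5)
Your proof is correct and takes essentially the same route as the paper: the paper's one-line argument applies $f(z) = z^r(z-1)^s$, by linearity of the umbral evaluation, to the identity $(\AP(x)+1)^n = (-\AP(-x))^n$, and under the dictionary $\AP(\pm x) = \pm x + \alpha$ this is exactly your substitution $\alpha \mapsto -1-\alpha$ carried out on the full product $(x+\alpha+1)^r(x+\alpha)^s$. The only cosmetic difference is that you pivot on the single-point identity $(1+\alpha)^n = (-\alpha)^n$ (property \eqref{eq:ap-ref-2}) whereas the paper derives its pivot identity from the translation formula \eqref{eq:ap-trans} together with \eqref{eq:ap-ref}; your emphasis that linearity (not multiplicativity) of the evaluation is what justifies the substitution is precisely the point the paper's proof rests on as well.
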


\begin{proof}
From \eqref{eq:ap-trans} and \eqref{eq:ap-ref}, we infer that
\[
  \sum_{\nu=0}^{n} \binom{n}{\nu} \AP_\nu(x) = \AP_n(x+1) = (-1)^n \, \AP_n(-x).
\]
Hence, $(\AP(x)+1)^n = (-\AP(-x))^n$. By linearity we can apply the polynomial
$f(z) = z^r (z-1)^s$ to derive
\[
  (\AP(x)+1)^r \, \AP(x)^s = (-1)^{r+s} \, \AP(-x)^r \, (\AP(-x)+1)^s,
\]
which implies the result.
\end{proof}

Taking $x=0$, the above relations consequently hold for the numbers
\begin{equation} \label{eq:ac-def}
  \alpha_{r,s} = (\alpha+1)^r \alpha^s
    = \sum_{\nu=0}^{r} \binom{r}{\nu} \alpha_{s+\nu}.
\end{equation}

\begin{corollary} \label{cor:ac-recip}
If property \eqref{eq:ap-ref} holds, then we have for $r,s \geq 0$
the reciprocity relation
\begin{equation} \label{eq:ac-recip}
  (-1)^r \alpha_{r,s} = (-1)^s \alpha_{s,r}.
\end{equation}
\end{corollary}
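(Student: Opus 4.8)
The plan is to obtain \eqref{eq:ac-recip} as the special case $x=0$ of the polynomial reciprocity relation \eqref{eq:ap-recip}, which has already been established. The key observation is that the numbers $\alpha_{r,s}$ are precisely the values $\AP_{r,s}(0)$. Indeed, evaluating the expansion \eqref{eq:ap-rs} at $x=0$ and using $\AP_n(0) = \alpha_n$ together with the definition \eqref{eq:ac-def}, I would record
\[
  \AP_{r,s}(0) = \sum_{\nu=0}^{r} \binom{r}{\nu} \AP_{s+\nu}(0)
    = \sum_{\nu=0}^{r} \binom{r}{\nu} \alpha_{s+\nu} = \alpha_{r,s},
\]
and symmetrically $\AP_{s,r}(0) = \alpha_{s,r}$.

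With this dictionary in hand, I would specialize \eqref{eq:ap-recip} at $x=0$. Since $-x$ reduces to $0$ when $x=0$, the argument $-x$ on the right-hand side causes no trouble: the factor $\AP_{s,r}(-x)$ simply becomes $\AP_{s,r}(0) = \alpha_{s,r}$. Substituting both evaluations yields
\[
  (-1)^r \alpha_{r,s} = (-1)^s \alpha_{s,r},
\]
which is exactly \eqref{eq:ac-recip}. This is the entire argument; the work has essentially been done at the polynomial level, and the corollary is a clean specialization.

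There is no real obstacle here, only the need to verify the identification $\AP_{r,s}(0)=\alpha_{r,s}$, which is immediate from the definitions. If one prefers an independent derivation, the same result can be read off purely umbrally: setting $x=0$ in the relation $(\AP(x)+1)^n = (-\AP(-x))^n$ from the preceding proof gives $(\alpha+1)^n = (-\alpha)^n$, and applying the polynomial $f(z)=z^r(z-1)^s$ by linearity to $\alpha_{r,s} = (\alpha+1)^r \alpha^s$ reproduces \eqref{eq:ac-recip} directly. Both routes are short; I would present the specialization of \eqref{eq:ap-recip} as the primary argument, since it exhibits the corollary most transparently as the $x=0$ shadow of the polynomial statement.
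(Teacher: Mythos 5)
Your proof is correct and is essentially the paper's own argument: the paper derives the corollary by taking $x=0$ in the polynomial reciprocity relation \eqref{eq:ap-recip}, using the identification $\AP_{r,s}(0)=\alpha_{r,s}$ that follows from \eqref{eq:ap-rs}, \eqref{eq:ac-def}, and $\AP_n(0)=\alpha_n$, exactly as you do. Your remark that the argument $-x$ collapses to $0$ is the right (and only) point needing care, so nothing is missing.
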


\begin{remark}
In the case $\alpha_n = \BN_n$ with the Bernoulli numbers, this leads to a
well-known reciprocity relation, which goes back to
Ettingshausen~\cite{Ettingshausen:1827} in 1827, see \cite{Kellner:2021b}.
The application of the umbral trick with $z^r (z-1)^s$ was already used by
Lucas~\cite[Sec.\,135, p.\,240]{Lucas:1891} in 1891, who also showed many
other examples.
\end{remark}

Theorem~\ref{thm:as-palin} can be given now by a similar representation using
\eqref{eq:ac-def} and \eqref{eq:ac-recip}.

\begin{corollary} \label{cor:ac-as}
For $n \geq 0$, we have
\begin{align*}
  \AS_n(x) &= (\alpha+1+\alpha x)^n
    = \sum_{\nu=0}^{n} \binom{n}{\nu} \alpha_{\nu,n-\nu} \, x^{n-\nu}, \\
\shortintertext{where}
  \ac_{n,\nu} &= \binom{n}{\nu} \alpha_{n-\nu,\nu} \mspace{7mu}
    \quad (0 \leq \nu \leq n). \\
\shortintertext{If property \eqref{eq:ap-ref} holds, then}
  \AS_n(x) &= (-1)^n \ASR_n(x) \\
\shortintertext{and}
  \ac_{n,\nu} &= (-1)^n \ac_{n,n-\nu} \quad (0 \leq \nu \leq n).
\end{align*}
\end{corollary}

To consider the general case of \eqref{eq:ap-rs} and \eqref{eq:ap-recip},
we define for $r,s \geq 0$ the bivariate polynomials
\[
  \AP_{r,s}(x,y) = (\AP(x)+y)^r \AP(x)^s.
\]

\begin{theorem}
Let $r,s \geq 0$. Then we have the following reciprocity relations
\begin{align}
  \AP_{r,s}(x,y) &= \AP_{s,r}(x+y,-y) \label{eq:ap-biv-1} \\
\shortintertext{and assuming property \eqref{eq:ap-ref} that}
  (-1)^r \AP_{r,s}(x,y) &= (-1)^s \AP_{s,r}(1-x-y,y). \label{eq:ap-biv-2}
\end{align}
\end{theorem}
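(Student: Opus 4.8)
The plan is to work entirely within the umbral calculus of Section~\ref{sec:umbral}, exactly as in the proof of the reciprocity relation~\eqref{eq:ap-recip}. The whole argument rests on two umbral substitution rules for the symbol $\AP$. First, the translation formula~\eqref{eq:ap-trans} reads umbrally as $\AP_n(x+y) = (\AP(x)+y)^n$, so that $\AP(x+y)$ and $\AP(x)+y$ agree as umbral symbols. Second, under property~\eqref{eq:ap-ref} the reflection relation gives $\AP_n(1-z) = (-1)^n\AP_n(z) = (-\AP(z))^n$, so that $\AP(1-z)$ and $-\AP(z)$ agree as umbral symbols. Both are identities between $n$-th umbral powers, and hence, by the linearity of umbral evaluation (the same device used for $f(z)=z^r(z-1)^s$ in the proof of~\eqref{eq:ap-recip}), they propagate to $f(\AP(x+y)) = f(\AP(x)+y)$ and $g(\AP(1-z)) = g(-\AP(z))$ for any polynomials $f,g$.

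For~\eqref{eq:ap-biv-1} I would simply unfold the definition: $\AP_{s,r}(x+y,-y) = (\AP(x+y)-y)^s\,\AP(x+y)^r = f(\AP(x+y))$ with $f(z) = (z-y)^s z^r$. Substituting $\AP(x+y) = \AP(x)+y$ and cancelling the $+y$ against the $-y$ gives $f(\AP(x)+y) = \AP(x)^s(\AP(x)+y)^r = \AP_{r,s}(x,y)$. Note that~\eqref{eq:ap-biv-1} uses only the translation rule and therefore needs no hypothesis on the sequence, matching its placement as the unconditional relation.

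For~\eqref{eq:ap-biv-2} I would combine both rules. Writing $1-x-y = 1-(x+y)$, the reflection rule followed by the translation rule yields the umbral identity $\AP(1-x-y) = -\AP(x+y) = -(\AP(x)+y)$. Unfolding the definition, $\AP_{s,r}(1-x-y,y) = g(\AP(1-x-y))$ with $g(z) = (z+y)^s z^r$, and substituting gives $g(-(\AP(x)+y)) = (-\AP(x))^s\,(-(\AP(x)+y))^r = (-1)^{r+s}(\AP(x)+y)^r\AP(x)^s = (-1)^{r+s}\AP_{r,s}(x,y)$. Multiplying by $(-1)^s$ and using $(-1)^{2s}=1$ produces $(-1)^s\AP_{s,r}(1-x-y,y) = (-1)^r\AP_{r,s}(x,y)$, which is~\eqref{eq:ap-biv-2}.

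The only genuine subtlety, and where I would take the most care, is the legitimacy of substituting one umbral symbol for another inside the product $(\,\cdot\,+y)^s(\,\cdot\,)^r$. The quantities $\AP(x+y)$, $\AP(1-x-y)$, and $\AP(x)$ are a priori distinct umbral symbols, and what the translation and reflection relations supply are equalities of their $n$-th powers after evaluation. Because each $\AP_{s,r}$ is a polynomial in a single such symbol and umbral evaluation is linear, these power-wise identities extend to the whole expression; this is precisely the linearity step already invoked for~\eqref{eq:ap-recip}. Once that is granted, both relations reduce to elementary cancellation of $\pm y$ and bookkeeping of the signs $(-1)^r$ and $(-1)^s$.
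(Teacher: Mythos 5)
Your proof is correct and follows essentially the same umbral route as the paper: both identities are obtained by applying a suitable polynomial to a power-wise umbral identity ($(\AP(x)+y)^n = \AP(x+y)^n$, respectively $\AP_n(1-z) = (-\AP(z))^n$) and invoking linearity of umbral evaluation, which is exactly the device the paper uses. The only cosmetic difference is in \eqref{eq:ap-biv-2}: the paper first establishes $\AP_{r,s}(1-x,-y) = (-1)^{r+s}\AP_{r,s}(x,y)$ via the reflection rule and then applies the already-proven \eqref{eq:ap-biv-1} to that expression, whereas you compose reflection and translation into the single substitution $\AP(1-x-y) = -(\AP(x)+y)$ and finish with one polynomial application---the same sign bookkeeping either way.
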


\begin{proof}
Since \mbox{$(\AP(x)+y)^n = \AP(x+y)^n$} by \eqref{eq:ap-trans}, relation
\eqref{eq:ap-biv-1} easily follows by applying the polynomial $f(z) = z^r (z-y)^s$.
Now we assume property \eqref{eq:ap-ref}. We then obtain
\begin{align*}
  \AP_{r,s}(1-x,-y) &= (\AP(1-x)-y)^r \AP(1-x)^s \\
    &= (-1)^{r+s} (\AP(x)+y)^r \AP(x)^s = (-1)^{r+s} \AP_{r,s}(x,y).
\end{align*}
Applying \eqref{eq:ap-biv-1} to $\AP_{r,s}(1-x,-y)$ finally implies
\eqref{eq:ap-biv-2}. This completes the proof.
\end{proof}

\begin{remark}
Identity \eqref{eq:ap-biv-1} is equivalent to the similar umbral relation
\[
  \mphrase{(A_k(x)+y)^m = (A_m(x+y)-y)^k \quad (k,m \geq 0)}
\]
of Agoh~\cite[Thm.~1.1]{Agoh:2021}, who derived this result recently by a
rather lengthy and complicated proof, compared to the one-line proof of
\eqref{eq:ap-biv-1} here.
\end{remark}

%%%%%%%%%%%%%%%%%%%%%%%%%%%%%%%%%%%%%%%%%%%%%%%%%%%%%%%%%%%%%%%%%%%%%%%%%%%%%%%%
% Section
%%%%%%%%%%%%%%%%%%%%%%%%%%%%%%%%%%%%%%%%%%%%%%%%%%%%%%%%%%%%%%%%%%%%%%%%%%%%%%%%

\section{Faulhaber-type polynomials}
\label{sec:ft-poly}

We consider the following substitution and relations
\[
  u = x(x-1), \quad u' = 2x-1, \andq (u')^2 = 4u+1,
\]
regarding the derivative with respect to~$x$. Define for $n \geq 0$ the symbols
\[
  \dd_n = \floor*{\frac{n}{2}} \andq \delta_n = \begin{cases}
    1, & \text{if $n$ is odd}, \\
    0, & \text{otherwise},
  \end{cases}
\]
satisfying the recurrence $\dd_{n+1} = \dd_n + \delta_n$. In the context of the
Appell polynomials $\AP_n(x)$, define for $n \geq 0$ the Faulhaber-type
polynomials
\[
  \FP_n(u) = \sum_{k \geq 0} \fp_{n,k} \, u^k
    = \sum_{k \geq 0} \fh_{n,k} \, \frac{u^k}{k!},
\]
where $\fp_{n,k} = \fh_{n,k} = 0$ for $k > \dd_n$. In case of ambiguity, we use
the notation, e.g., $\fp_{n,k} \valueat{\AP_n(x)}$ to indicate the specific
context. The Fibonacci numbers can be defined for $n \geq 0$ by
\[
  \FN_n = \frac{\phi^n - (1-\phi)^n}{\sqrt{5}}
\]
with the golden ratio $\phi = (1 + \sqrt{5})/2$, cf.~\cite[p.~299]{GKP:1994}.

\begin{lemma} \label{lem:ap-ref-h}
For $n \geq 0$, we have
\[
  \AP_n(x) = \sum_{\substack{\nu=0\\2 \mids \nu}}^{n} \binom{n}{\nu}
    \AP_{\nu}\mleft( \tfrac{1}{2} \mright) \,
    \mleft( x - \tfrac{1}{2} \mright)^{\!n-\nu},
\]
where the condition $2 \mid \nu$ holds, if property \eqref{eq:ap-ref} holds.
\end{lemma}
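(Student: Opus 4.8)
The plan is to read the asserted identity as the expansion of $\AP_n$ about the center $\tfrac{1}{2}$, which is immediate from the translation formula \eqref{eq:ap-trans}, and then to switch on the parity restriction using \eqref{eq:ap-ref-3}. First I would take \eqref{eq:ap-trans} and replace its $x$ by $\tfrac{1}{2}$ and its $y$ by $x-\tfrac{1}{2}$, so that $x+y$ becomes $x$; this gives
\[
  \AP_n(x) = \sum_{\nu=0}^{n} \binom{n}{\nu}
    \AP_{n-\nu}\mleft(\tfrac{1}{2}\mright)
    \mleft(x-\tfrac{1}{2}\mright)^{\!\nu}.
\]
Reindexing by $\nu\mapsto n-\nu$ and using $\binom{n}{n-\nu}=\binom{n}{\nu}$ then produces exactly the stated summand $\binom{n}{\nu}\AP_\nu(\tfrac{1}{2})(x-\tfrac{1}{2})^{n-\nu}$, now with $\nu$ ranging over all of $0\le\nu\le n$ and no congruence condition. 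This settles the identity in the general case.

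The remaining task is to explain why, once \eqref{eq:ap-ref} is assumed, the summation collapses to even indices. As recorded in the introduction, \eqref{eq:ap-ref} immediately implies \eqref{eq:ap-ref-3}, so $\AP_\nu(\tfrac{1}{2})=0$ for every odd $\nu\ge 1$. Hence each summand with $\nu$ odd carries the zero factor $\AP_\nu(\tfrac{1}{2})$ and drops out, leaving only the terms with $2\mid\nu$. This is precisely the restricted form in the statement.

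Neither half is a real obstacle: both are one-line consequences of material already in hand, and the only care needed is clerical — during the reindexing one must keep the power of $x-\tfrac{1}{2}$ on the complementary index $n-\nu$ while the binomial and the value at $\tfrac{1}{2}$ stay on $\nu$, noting that $\nu=0$ is even so the term $\AP_0(\tfrac{1}{2})=\alpha_0$ is retained. Finally, this clarifies the sense in which the lemma is \emph{more general} than \eqref{eq:ap-ref-3}, as promised in the introduction: the unconditional expansion already exhibits $\AP_n(\tfrac{1}{2}+t)$ as a polynomial in $t=x-\tfrac{1}{2}$ with coefficients $\binom{n}{\nu}\AP_\nu(\tfrac{1}{2})$, and assuming merely \eqref{eq:ap-ref-3} kills all odd-index coefficients, leaving only powers $t^{n-\nu}$ of the same parity as $n$; this pure parity is exactly $\AP_n(1-x)=(-1)^n\AP_n(x)$, so one recovers \eqref{eq:ap-ref} itself and not just the value \eqref{eq:ap-ref-3} at the center.
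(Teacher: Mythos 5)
Your proof is correct and takes essentially the same approach as the paper: apply the translation formula \eqref{eq:ap-trans} centered at $\tfrac{1}{2}$ (the paper leaves the reindexing $\nu \mapsto n-\nu$ implicit), then use \eqref{eq:ap-ref}/\eqref{eq:ap-ref-3} to kill the odd-index terms. Your closing observation that the even-index expansion conversely recovers \eqref{eq:ap-ref} from \eqref{eq:ap-ref-3} is also sound and is exactly the ``more general result'' the introduction alludes to.
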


\begin{proof}
By the translation formula \eqref{eq:ap-trans}, we obtain
\[
  \AP_n(x) = \AP_n \mleft( x - \tfrac{1}{2} + \tfrac{1}{2} \mright)
    = \sum_{\substack{\nu=0\\2 \mids \nu}}^{n} \binom{n}{\nu}
    \AP_{\nu}\mleft( \tfrac{1}{2} \mright) \,
    \mleft( x - \tfrac{1}{2} \mright)^{\!n-\nu},
\]
where the condition $2 \mid \nu$ holds due to properties~\eqref{eq:ap-ref}
and \eqref{eq:ap-ref-3}.
\end{proof}

\begin{lemma} \label{lem:ap-subst}
For $n \geq 0$, we have
\begin{equation} \label{eq:ap-subst-1}
  \AP_n(x) = 2^{-n} \sum_{\nu=0}^{n} \binom{n}{\nu} \AS_\nu(1) \, (u')^{n-\nu}.
\end{equation}
If property \eqref{eq:ap-ref} holds, then
\begin{equation} \label{eq:ap-subst-2}
  \AP_n(x) = (u')^{\delta_n} \, 2^{-n} \sum_{\nu=0}^{\dd_n}
    \binom{n}{2\nu} \AS_{2\nu}(1) \, (4u+1)^{\dd_n-\nu}.
\end{equation}
\end{lemma}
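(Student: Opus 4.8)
The plan is to obtain \eqref{eq:ap-subst-1} directly from the umbral calculus of Section~\ref{sec:umbral} and then deduce \eqref{eq:ap-subst-2} from it by a parity argument under \eqref{eq:ap-ref}. The key observation is that, by Corollary~\ref{cor:ac-as}, the coefficients occurring in the sum have the closed umbral form
\[
  \AS_\nu(1) = (\alpha + 1 + \alpha)^\nu = (2\alpha+1)^\nu,
\]
so that the weighted sum over $\nu$ should collapse via the binomial theorem, the quantity $u' = 2x-1$ playing the role of an ordinary scalar.

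First I would treat \eqref{eq:ap-subst-1}. Since $u'$ is an ordinary polynomial in $x$ while $\alpha$ carries the umbra, the binomial theorem applied to the right-hand side gives
\[
  2^{-n} \sum_{\nu=0}^{n} \binom{n}{\nu} (2\alpha+1)^\nu (u')^{n-\nu}
    = 2^{-n} (2\alpha + 1 + u')^n.
\]
Substituting $u' = 2x-1$ reduces the bracket to $2\alpha + 2x$, whence the expression equals $2^{-n}(2\alpha+2x)^n = (x+\alpha)^n = \AP_n(x)$, using $\AP_n(x) = (x+\alpha)^n$ from Section~\ref{sec:umbral}. This settles \eqref{eq:ap-subst-1} with no real computation beyond the umbral collapse.

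For \eqref{eq:ap-subst-2} I would invoke property \eqref{eq:ap-ref}. By Theorem~\ref{thm:as-palin} we have $\AS_\nu(1) = \AR_\nu(2)$, and \eqref{eq:ar-ref} gives $\AR_\nu(2) = 0$ for odd $\nu \geq 1$; hence only the even indices survive in \eqref{eq:ap-subst-1}. Writing $\nu = 2\mu$, the index $\mu$ then runs from $0$ to $\dd_n$, and the evident identity $n = 2\dd_n + \delta_n$ yields $n - 2\mu = 2(\dd_n - \mu) + \delta_n$. I would then split off the parity factor via $(u')^2 = 4u+1$,
\[
  (u')^{n-2\mu} = (u')^{\delta_n} \big( (u')^2 \big)^{\dd_n - \mu}
    = (u')^{\delta_n} (4u+1)^{\dd_n - \mu},
\]
and pull the common factor $(u')^{\delta_n}$ out of the sum to arrive at \eqref{eq:ap-subst-2}.

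The argument is short, and the only place requiring care is the bookkeeping in the second part: one must confirm that the vanishing of $\AS_\nu(1)$ for odd $\nu$ is precisely what restricts the sum to even indices, and that the split $n = 2\dd_n + \delta_n$ is applied uniformly for $n$ even and odd, so that the factor $(u')^{\delta_n}$ comes out correctly in both cases. Beyond this routine reindexing I expect no genuine obstacle.
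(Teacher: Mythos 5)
Your proposal is correct, and its second half coincides in substance with the paper's proof: the paper removes the odd-index terms via the parity condition built into Lemma~\ref{lem:ap-ref-h} (that is, \eqref{eq:ap-ref-3}: $\AP_\nu(\tfrac{1}{2})=0$ for odd $\nu$ under \eqref{eq:ap-ref}), while you use the equivalent fact \eqref{eq:ar-ref} that $\AR_\nu(2)=0$ for odd $\nu$; the two agree through $\AS_\nu(1)=\AR_\nu(2)=2^\nu \AP_\nu(\tfrac{1}{2})$, and your reindexing with $n=2\dd_n+\delta_n$ and $(u')^2=4u+1$ is exactly the paper's. Where you genuinely differ is the derivation of \eqref{eq:ap-subst-1}: the paper substitutes $\AS_\nu(1)=2^\nu\AP_\nu(\tfrac{1}{2})$ and $(u')^{n-\nu}=2^{n-\nu}(x-\tfrac{1}{2})^{n-\nu}$ into Lemma~\ref{lem:ap-ref-h} (the translation formula \eqref{eq:ap-trans} centred at $\tfrac{1}{2}$), whereas you feed the umbral form $\AS_\nu(1)=(2\alpha+1)^\nu$ from Corollary~\ref{cor:ac-as} into the binomial theorem and collapse $2^{-n}(2\alpha+1+u')^n=(x+\alpha)^n$. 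These are the same identity in different dress---the translation formula \emph{is} the umbral binomial theorem---but your route leans on the Section~\ref{sec:umbral} machinery and avoids the explicit rescaling, while the paper's route has the advantage that Lemma~\ref{lem:ap-ref-h} already carries the condition $2 \mids \nu$ under \eqref{eq:ap-ref}, so the passage to \eqref{eq:ap-subst-2} needs no separate appeal to \eqref{eq:ar-ref}. Your umbral step is legitimate: the evaluation functional acts linearly on powers of $\alpha$ with $x$ (hence $u'$) as a scalar parameter, which is precisely the kind of argument the paper itself employs in Section~\ref{sec:umbral}; and since Corollary~\ref{cor:ac-as} precedes this lemma and its relevant part does not assume \eqref{eq:ap-ref}, there is no circularity.
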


\begin{proof}
Note that $\AS_\nu(1) = \AR_\nu(2) = 2^\nu \AP_\nu(\frac{1}{2})$ and
$(u')^{n-\nu} = 2^{n-\nu} (x-\frac{1}{2})^{n-\nu}$.
From Lemma~\ref{lem:ap-ref-h} the first claimed formula~\eqref{eq:ap-subst-1}
follows without the condition $2 \mid \nu$. Now we assume
property~\eqref{eq:ap-ref}. Using $(u')^2 = 4u+1$ and factoring $u'$ out
when $\nu$ is even, we then derive
\[
  \AP_n(x) = (u')^{\delta_n} \, 2^{-n} \sum_{\substack{\nu=0\\2 \mids \nu}}^{n}
    \binom{n}{\nu} \AS_\nu(1) \, (4u+1)^{(n-\delta_n-\nu)/2}.
\]
With $\dd_n = (n-\delta_n)/2$ and changing the indexing, this implies the
second claimed formula~\eqref{eq:ap-subst-2} and completes the proof.
\end{proof}

First relations between the Appell and Faulhaber-type polynomials are given as
follows.

\begin{theorem} \label{thm:fp-coeff}
Assume property \eqref{eq:ap-ref}. For $n \geq 0$, we have
\begin{equation} \label{eq:ap-fp}
  \AP_n(x) = (u')^{\delta_n} \, \FP_n(u),
\end{equation}
where
\begin{equation} \label{eq:fp-coeff}
  \fp_{n,k} = 2^{2k-n} \sum_{\nu=0}^{\dd_n-k} \binom{n}{2\nu}
    \binom{\dd_n-\nu}{k} \AS_{2\nu}(1)
\end{equation}
for $0 \leq k \leq \dd_n$. In particular, the constant term and the leading
coefficient satisfy that
\[
  \fp_{n,0} = (-1)^n \alpha_n \andq \fp_{n,\dd_n} = (\tfrac{1}{2})^{\delta_n} \alpha_0,
\]
respectively. The sum of the coefficients $\fp_{n,k}$ obey the relations
\[
  \AP_n(\phi) = (\sqrt{5})^{\delta_n} \, \FP_n(1)
\]
and with the Fibonacci numbers $\FN_\nu$ (in umbral sense) that
\[
  \AP_n(\FB) = \begin{cases}
    2 \FP_n(1), & \text{if $n$ is odd}, \\
    0, & \text{otherwise}.
  \end{cases}
\]
\end{theorem}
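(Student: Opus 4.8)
The plan is to read essentially everything off the closed form already supplied by Lemma~\ref{lem:ap-subst}. Granting property~\eqref{eq:ap-ref}, formula~\eqref{eq:ap-subst-2} has exactly the shape $\AP_n(x) = (u')^{\delta_n} \cdot 2^{-n}\sum_{\nu=0}^{\dd_n}\binom{n}{2\nu}\AS_{2\nu}(1)(4u+1)^{\dd_n-\nu}$, so I would simply \emph{define} $\FP_n(u)$ to be the polynomial in $u$ standing to the right of $(u')^{\delta_n}$; this delivers the factorization~\eqref{eq:ap-fp} for free. To extract the coefficients~\eqref{eq:fp-coeff}, I would expand each power binomially, $(4u+1)^{\dd_n-\nu} = \sum_{k\ge 0}\binom{\dd_n-\nu}{k}4^k u^k$, interchange the two summations, and collect the coefficient of $u^k$. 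Since $4^k = 2^{2k}$ combines with the prefactor $2^{-n}$ into $2^{2k-n}$, and the inner binomial forces $\nu \le \dd_n - k$, this reproduces~\eqref{eq:fp-coeff} verbatim. This step is purely mechanical.

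For the two distinguished coefficients I would sidestep the general sum. The constant term is $\fp_{n,0} = \FP_n(0)$, and the cleanest route is to evaluate~\eqref{eq:ap-fp} at $x = 1$: there $u = x(x-1) = 0$ and $u' = 2x - 1 = 1$, so $\AP_n(1) = \FP_n(0)$, and combining this with the identity $\AP_n(1) = (-1)^n\alpha_n$ valid under~\eqref{eq:ap-ref} gives $\fp_{n,0} = (-1)^n\alpha_n$. For the leading coefficient I would instead set $k = \dd_n$ in~\eqref{eq:fp-coeff}: the constraint $\nu \le \dd_n - k = 0$ leaves only $\nu = 0$, namely $2^{2\dd_n - n}\binom{\dd_n}{\dd_n}\AS_0(1)$. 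Here $\AS_0(1) = \alpha_0$ from the definition of $\ac_{0,0}$, and since $2\dd_n - n = -\delta_n$, this collapses to $\fp_{n,\dd_n} = 2^{-\delta_n}\alpha_0 = (\tfrac12)^{\delta_n}\alpha_0$.

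The remaining identities concern the value $\FP_n(1)$, i.e.\ the substitution $u = 1$, so I would locate where $x(x-1) = 1$, the roots of $x^2 - x - 1 = 0$. The golden ratio $\phi$ is one such root, and there $u' = 2\phi - 1 = \sqrt 5$; plugging $x = \phi$ into~\eqref{eq:ap-fp} yields $\AP_n(\phi) = (\sqrt 5)^{\delta_n}\FP_n(1)$ immediately. For the Fibonacci evaluation the one point needing care is the umbral reading: using Binet's formula $\FN_\nu = (\phi^\nu - (1-\phi)^\nu)/\sqrt 5$ together with the Appell expansion $\AP_n(x) = \sum_\nu \binom{n}{\nu}\alpha_{n-\nu}x^\nu$, the umbral quantity unfolds as $\AP_n(\FB) = (\AP_n(\phi) - \AP_n(1-\phi))/\sqrt 5$. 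Now $1 - \phi$ is precisely the reflected argument, so~\eqref{eq:ap-ref} gives $\AP_n(1-\phi) = (-1)^n\AP_n(\phi)$, whence $\AP_n(\FB) = (1-(-1)^n)\AP_n(\phi)/\sqrt 5$; this vanishes for even $n$, while for odd $n$ it equals $2\AP_n(\phi)/\sqrt 5 = 2\FP_n(1)$ after inserting the golden-ratio relation just established. The only genuine subtlety in the whole argument is recognizing $1 - \phi$ as the reflection partner of $\phi$ so that~\eqref{eq:ap-ref} applies; everything else is bookkeeping from Lemma~\ref{lem:ap-subst}.
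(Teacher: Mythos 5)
Your proposal is correct and follows essentially the same route as the paper's own proof: expanding $(4u+1)^{\dd_n-\nu}$ in Lemma~\ref{lem:ap-subst} to get \eqref{eq:ap-fp} and \eqref{eq:fp-coeff}, evaluating at $x=1$ for $\fp_{n,0}$, taking $k=\dd_n$ in \eqref{eq:fp-coeff} for the leading coefficient, and substituting $x=\phi$ together with the Binet/reflection argument $\AP_n(\FB) = (\AP_n(\phi)-\AP_n(1-\phi))/\sqrt{5}$ for the last two identities. The only content of the paper's proof you skip is the harmless consistency remark for $n=0,1$ (where $\fp_{n,0}$ and $\fp_{n,\dd_n}$ coincide and agree via \eqref{eq:ap-ref-4}), which is not logically required.
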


\begin{proof}
Using Lemma~\ref{lem:ap-subst}, we obtain \eqref{eq:ap-fp} and \eqref{eq:fp-coeff}
by expanding the powers $(4u+1)^{\dd_n-\nu}$ in \eqref{eq:ap-subst-2}.
With $x=1$, $u=0$, and $u'=1$, we obtain by \eqref{eq:ap-fp} that
\begin{align*}
  \fp_{n,0} &= \FP_n(0) = \AP_n(1) = (-1)^n \alpha_n. \\
\shortintertext{Further by \eqref{eq:fp-coeff}, it follows that}
  \fp_{n,\dd_n} &= 2^{2\dd_n-n} \AS_0(1) = 2^{-\delta_n} \alpha_0.
\end{align*}
Note that in the cases $n=0,1$ we have $\dd_n=0$, so $\fp_{n,0} = \fp_{n,\dd_n}$.
Both cases are valid, since $\fp_{n,0} = \fp_{n,\dd_n} = \alpha_0$ for $n=0$ and
$\fp_{n,0} = -\alpha_1 = \frac{1}{2} \alpha_0 = \fp_{n,\dd_n}$ by
\eqref{eq:ap-ref-4} for $n=1$.

Now, let $x=\phi$, so $u=1$ and $u'=\sqrt{5}$. Together with \eqref{eq:ap-ref},
we infer that
\[
  \AP_n(\phi) = (\sqrt{5})^{\delta_n} \, \FP_n(1) \andq
  \AP_n(1-\phi) = (-\sqrt{5})^{\delta_n} \, \FP_n(1).
\]
The difference \mbox{$\AP_n(\phi) - \AP_n(1-\phi)$} implies the claimed formula
for $\AP_n(\FB)$. This completes the proof.
\end{proof}

\begin{example}
In the case of Euler and Bernoulli polynomials, we obtain for $n \geq 0$
and $0 \leq k \leq \dd_n$ that
\begin{align*}
  \EN_n(x) &= (u')^{\delta_n} \, 2^{-n} \sum_{\nu=0}^{\dd_n}
    \binom{n}{2\nu} (\EN_{2\nu}) (4u+1)^{\dd_n-\nu},\\
  \fp_{n,k} \valueat{\EN_n(x)} &= 2^{2k-n} \sum_{\nu=0}^{\dd_n-k} \binom{n}{2\nu}
    \binom{\dd_n-\nu}{k} \EN_{2\nu}, \\
\shortintertext{respectively}
  \BN_n(x) &= (u')^{\delta_n} \, 2^{-n} \sum_{\nu=0}^{\dd_n}
    \binom{n}{2\nu} \mleft( \BN_{2\nu} + \tfrac{1}{2} \GN_{2\nu} \mright)
    (4u+1)^{\dd_n-\nu}, \\
  \fp_{n,k} \valueat{\BN_n(x)} &= 2^{2k-n} \sum_{\nu=0}^{\dd_n-k} \binom{n}{2\nu}
    \binom{\dd_n-\nu}{k} \mleft( \BN_{2\nu} + \tfrac{1}{2} \GN_{2\nu} \mright).
\end{align*}
\end{example}

\begin{remark}
In 1867, Schr\"oder \cite{Schroeder:1867} gave explicit expressions of $\BN_n(x)$
expanded in terms of~$\BN_{2\nu}$ and $\xi = -u$ in the context of Faulhaber
polynomials and power sums, see \cite{Kellner:2021} for a simplified approach.
In contrast here, $\AS_{2\nu}(1) = \BN_{2\nu} + \frac{1}{2} \GN_{2\nu}$ implies
the \emph{suitable term} at once. Lemma~\ref{lem:ap-ref-h} was given by
N{\o}rlund~\cite[p.~28]{Norlund:1924} for $\BN_n(x)$ in 1924.
\end{remark}

In view of Theorem~\ref{thm:fp-coeff}, the Faulhaber-type polynomials $\FP_n(u)$
satisfy only the Appell property $\FPD_n(u) = n \FP_{n-1}(u)$ in case $n \geq 2$
even as shown by the following theorem. For the recurrences of the coefficients
$\fp_{n,k}$, it is more convenient to switch to $\fh_{n,k}$.

\begin{theorem} \label{thm:fp-recur}
Assume property \eqref{eq:ap-ref}. There are the following recurrences:
\begin{align*}
  n \, \FP_{n-1}(u) &= \begin{cases}
    \FPD_n(u), &\text{if $n \geq 2$ is even}, \\
    2 \FP_n(u) + (4u+1) \FPD_n(u), &\text{if $n \geq 1$ is odd},
  \end{cases}\\
\shortintertext{and}
  (n)_2 \, \FP_{n-2}(u) &= \begin{cases}
    2 \FPD_n(u) + (4u+1) \FPDD_n(u), &\text{if $n \geq 2$ is even}, \\
    6 \FPD_n(u) + (4u+1) \FPDD_n(u), &\text{if $n \geq 3$ is odd}.
  \end{cases}
\shortintertext{The coefficients obey the following recurrences:}
  n \, \fh_{n-1,k} &= \begin{cases}
    \fh_{n,k+1}, &\text{if $n \geq 2$ is even}, \\
    (4k+2) \, \fh_{n,k} + \fh_{n,k+1}, \mspace{21.6mu}\ &\text{if $n \geq 1$ is odd},
  \end{cases}
\shortintertext{and}
  (n)_2 \, \fh_{n-2,k} &= \begin{cases}
    (4k+2) \, \fh_{n,k+1} + \fh_{n,k+2}, &\text{if $n \geq 2$ is even}, \\
    (4k+6) \, \fh_{n,k+1} + \fh_{n,k+2}, \mspace{5.4mu}\ &\text{if $n \geq 3$ is odd},
  \end{cases}
\end{align*}
for $0 \leq k \leq \dd_{n-1}$ and $0 \leq k \leq \dd_{n-2}$, respectively.
\end{theorem}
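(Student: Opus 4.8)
The plan is to derive all four pairs of recurrences from the single identity $\AP_n(x) = (u')^{\delta_n} \FP_n(u)$ of \eqref{eq:ap-fp}, differentiated once and twice with respect to $x$, and then matched against the Appell differentiation rule \eqref{eq:ap-deriv}. The only tools needed are the chain rule together with $\dop_x u = u'$, $\dop_x u' = 2$, and $(u')^2 = 4u+1$, plus the parity bookkeeping $\delta_{n-1} = 1 - \delta_n$ and $\delta_{n-2} = \delta_n$.

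For the first-order recurrences I would split into the two parities. If $n$ is even, then $\delta_n = 0$ and $\delta_{n-1} = 1$, so $\AP_n(x) = \FP_n(u)$ while $\AP_{n-1}(x) = u' \FP_{n-1}(u)$; differentiating gives $\AP'_n(x) = u' \FPD_n(u)$, and comparison with $\AP'_n = n \AP_{n-1}$ yields $u' \FPD_n(u) = n\, u' \FP_{n-1}(u)$, whence $\FPD_n(u) = n \FP_{n-1}(u)$ after cancelling $u'$. If $n$ is odd, then $\AP_n(x) = u'\FP_n(u)$, and the product rule with $(u')^2 = 4u+1$ gives $\AP'_n(x) = 2\FP_n(u) + (4u+1)\FPD_n(u)$; since now $\AP_{n-1}(x) = \FP_{n-1}(u)$, this is exactly $n \FP_{n-1}(u) = 2\FP_n(u) + (4u+1)\FPD_n(u)$.

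The second-order recurrences follow by differentiating once more. Here $\delta_{n-2} = \delta_n$, so the factor $(u')^{\delta_n}$ appears on both sides of $\AP_n^{(2)} = (n)_2\, \AP_{n-2}$ and cancels cleanly. For $n$ even one computes $\AP_n^{(2)}(x) = 2\FPD_n(u) + (4u+1)\FPDD_n(u)$, and for $n$ odd the extra differentiation of the factor $u'$ contributes an additional $4\FPD_n(u)$, producing $u'\bigl(6\FPD_n(u) + (4u+1)\FPDD_n(u)\bigr)$; dividing by $u'$ in the odd case gives the two stated identities.

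Finally, to obtain the coefficient recurrences I would read off the coefficient of $u^k/k!$ in each polynomial identity, using that in this normalized basis the coefficient of $u^k/k!$ in $\FPD_n(u)$ is $\fh_{n,k+1}$ and in $\FPDD_n(u)$ is $\fh_{n,k+2}$, while multiplication by $u$ contributes a factor $k$: the coefficient of $u^k/k!$ in $4u\FPD_n(u)$ is $4k\,\fh_{n,k}$, and in $4u\FPDD_n(u)$ is $4k\,\fh_{n,k+1}$. Substituting these into the four polynomial recurrences yields $n\,\fh_{n-1,k} = \fh_{n,k+1}$ in the even case, $n\,\fh_{n-1,k} = (4k+2)\fh_{n,k} + \fh_{n,k+1}$ in the odd case, and their second-order analogues. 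The only delicate point is the parity bookkeeping: the asymmetric appearance of $u'$ in the first-order case (because $\delta_n$ and $\delta_{n-1}$ differ) is precisely what separates the even and odd recurrences, whereas in the second-order case the common factor cancels and only the constant produced by differentiating $u'$ — the shift from $2$ to $6$ — distinguishes the parities.
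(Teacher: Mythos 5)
Your proof is correct and follows essentially the same route as the paper: differentiating $\AP_n(x) = (u')^{\delta_n}\FP_n(u)$ once and twice, invoking $\AP_n^{(\ell)}(x) = (n)_\ell\,\AP_{n-\ell}(x)$, splitting by parity (your chain-rule computations are exactly the content of Lemma~\ref{lem:fp-deriv}), and then extracting coefficients of $u^k/k!$ via the same identities $\coeff{u^k/k!}\,\FPD_n = \fh_{n,k+1}$ and $\coeff{u^k/k!}\,u\,\dop_u^\ell\FP_n = k\,\fh_{n,k+\ell-1}$ that the paper uses. Nothing is missing; your version merely writes out explicitly what the paper delegates to the lemma.
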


We need the following relations, which are easily derived.

\begin{lemma} \label{lem:fp-deriv}
For $n \geq 0$, there are the following derivatives:
\begin{align*}
  \dop_x \, \FP_n(u) &= u' \, \FPD_n(u), \\
  \dop_x^2 \, \FP_n(u) &= 2 \FPD_n(u) + (4u+1) \FPDD_n(u), \\
  \dop_x \, ( u' \, \FP_n(u) ) &= 2 \FP_n(u) + (4u+1) \FPD_n(u), \\
  \dop_x^2 \, ( u' \, \FP_n(u) ) &= u' (6 \FPD_n(u) + (4u+1) \FPDD_n(u)).
\end{align*}
\end{lemma}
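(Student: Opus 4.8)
The plan is to treat all four identities as direct consequences of the chain and product rules, keeping careful track of the two distinct differentiations in play: $\dop_x$ differentiates with respect to $x$, whereas $\FPD_n$ and $\FPDD_n$ denote derivatives of $\FP_n$ with respect to its argument $u$. Since $u = x(x-1)$ is a function of $x$, the chain rule reads $\dop_x\, g(u) = u'\, g'(u)$ for any smooth $g$; the only auxiliary facts needed are $u' = 2x-1$, hence $u'' = 2$, together with the stated relation $(u')^2 = 4u+1$.

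The first identity is immediate: $\dop_x \FP_n(u) = u'\, \FPD_n(u)$ by the chain rule. For the second, I would differentiate this once more and apply the product rule, $\dop_x^2 \FP_n(u) = u''\, \FPD_n(u) + u'\, \dop_x \FPD_n(u) = u''\, \FPD_n(u) + (u')^2\, \FPDD_n(u)$, and then substitute $u'' = 2$ and $(u')^2 = 4u+1$ to obtain $2\FPD_n(u) + (4u+1)\FPDD_n(u)$. The third identity follows analogously by the product rule applied to $u'\, \FP_n(u)$, namely $\dop_x(u'\, \FP_n(u)) = u''\, \FP_n(u) + u'\, \dop_x \FP_n(u) = u''\, \FP_n(u) + (u')^2\, \FPD_n(u)$, which reduces to $2\FP_n(u) + (4u+1)\FPD_n(u)$ after the same two substitutions.

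Finally, the fourth identity is obtained by differentiating the third one more time: applying $\dop_x$ to $2\FP_n(u) + (4u+1)\FPD_n(u)$ and using the first identity twice---together with $\dop_x(4u+1) = 4u'$---gives $2u'\FPD_n(u) + 4u'\FPD_n(u) + (4u+1)u'\FPDD_n(u)$, and factoring out $u'$ yields $u'(6\FPD_n(u) + (4u+1)\FPDD_n(u))$.

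There is no genuine obstacle here; the content is routine once the substitution rules are in place. The only point requiring care is notational discipline---not conflating $u'$, which is the $x$-derivative of $u$, with the $u$-derivatives $\FPD_n$ and $\FPDD_n$---and remembering to replace $(u')^2$ by $4u+1$ so that every right-hand side is expressed purely in terms of $u$, $\FP_n(u)$, and its $u$-derivatives (times at most one factor of $u'$).
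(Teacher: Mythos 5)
Your proof is correct; all four identities follow exactly as you compute them via the chain rule, product rule, and the substitutions $u''=2$ and $(u')^2 = 4u+1$. The paper offers no proof at all---it states only that the relations are ``easily derived''---so your routine calculus argument is precisely the intended derivation.
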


\begin{proof}[Proof of Theorem~\ref{thm:fp-recur}]
Thanks to Lemma~\ref{lem:fp-deriv}, the expressions of $n \FP_{n-1}(u)$ and
$(n)_2 \FP_{n-2}(u)$ follow from applying $\dop^\ell_x$ to \eqref{eq:ap-fp} and
using $\AP_n^{(\ell)}(x) = (n)_\ell \, \AP_{n-\ell}(x)$ by \eqref{eq:ap-deriv}
for each $\ell = 1,2$, where the cases $n$ even and odd have to be handled
separately. The recurrences of the coefficients $\fh_{n,k}$ then are
subsequently deduced from using the identities
\[
  \dop^\ell_u \, \FP_n(u)
    = \sum_{k \geq 0} \fh_{n,k+\ell} \, \frac{u^k}{k!} \andq
  u \, \dop^\ell_u \, \FP_n(u)
    = \sum_{k \geq 0} k \, \fh_{n,k+\ell-1} \, \frac{u^k}{k!}.
\]
This proves the theorem.
\end{proof}

%%%%%%%%%%%%%%%%%%%%%%%%%%%%%%%%%%%%%%%%%%%%%%%%%%%%%%%%%%%%%%%%%%%%%%%%%%%%%%%%
% Section
%%%%%%%%%%%%%%%%%%%%%%%%%%%%%%%%%%%%%%%%%%%%%%%%%%%%%%%%%%%%%%%%%%%%%%%%%%%%%%%%

\section{Recurrences and symmetries}
\label{sec:sym}

We use the notation and definitions of the preceding sections implicitly.

\begin{lemma} \label{lem:la-recur}
Define for $n \geq k \geq 0$ the sum
\begin{align}
  \Lambda_{n,k}(\alpha) &= \sum_{\nu=0}^{k} \binom{n}{\nu} (2k-\nu)_k \,
    \alpha_{n-\nu}. \label{eq:la-def} \\
\shortintertext{For $n - 2 \geq k \geq 0$, we have the recurrence}
  (n)_2 \, \Lambda_{n-2,k}(\alpha) &= \Lambda_{n,k+2}(\alpha) - (4k+6)
    \Lambda_{n,k+1}(\alpha). \label{eq:la-recur}
\end{align}
\end{lemma}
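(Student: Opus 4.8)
The plan is to prove \eqref{eq:la-recur} by a direct comparison of coefficients, exploiting that each $\Lambda_{n,k}(\alpha)$ defined in \eqref{eq:la-def} is linear in the sequence $(\alpha_m)$. Thus it suffices to match, for every relevant index, the coefficient of $\alpha_{n-j}$ on both sides. On the left, reindexing $\Lambda_{n-2,k}(\alpha)$ by $\nu = j-2$ turns its generic term into $\binom{n-2}{j-2}(2k-j+2)_k\,\alpha_{n-j}$ for $2 \le j \le k+2$, and the prefactor $(n)_2$ combines via the elementary identity $(n)_2\binom{n-2}{j-2} = \binom{n}{j}(j)_2$. On the right, $\Lambda_{n,k+2}(\alpha)$ and $\Lambda_{n,k+1}(\alpha)$ contribute $\binom{n}{j}(2k+4-j)_{k+2}$ and $\binom{n}{j}(2k+2-j)_{k+1}$ to the coefficient of $\alpha_{n-j}$, respectively. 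First I would record that all boundary effects are automatic: for $j \in \{0,1\}$ the factor $(j)_2$ annihilates the (absent) left-hand contribution, while for $j = k+2$ the falling factorial $(2k+2-j)_{k+1} = (k)_{k+1}$ vanishes, so the missing $\Lambda_{n,k+1}$-term needs no separate treatment.

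Having cancelled the common factor $\binom{n}{j}$ (nonzero since $j \le k+2 \le n$), the entire statement collapses to the $n$-free polynomial identity in $j$
\[
  (j)_2 \, (2k-j+2)_k = (2k+4-j)_{k+2} - (4k+6)\,(2k+2-j)_{k+1}.
\]
The crucial step is then the substitution $a = 2k+2-j$, under which $(2k-j+2)_k = (a)_k$, $(2k+2-j)_{k+1} = (a)_{k+1}$, $(2k+4-j)_{k+2} = (a+2)_{k+2}$, and $(j)_2 = (2k+2-a)(2k+1-a)$. Using the factorizations $(a+2)_{k+2} = (a+2)(a+1)\,(a)_k$ and $(a)_{k+1} = (a-k)\,(a)_k$, every term carries the common factor $(a)_k$, which I would cancel to reduce the identity to the quadratic
\[
  (2k+2-a)(2k+1-a) = (a+2)(a+1) - (4k+6)(a-k).
\]
A direct expansion of both sides to $a^2 - (4k+3)a + (4k^2+6k+2)$ then finishes the proof.

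The only genuine obstacle is the bookkeeping: getting the index shift $\nu \mapsto j$ right and confirming that the three distinct summation ranges on the two sides reconcile. The payoff of the substitution $a = 2k+2-j$ is that it converts the asymmetric mixture of falling factorials of lengths $k$, $k+1$, and $k+2$ into a single common factor $(a)_k$ times a quadratic; once the polynomial identity is established it holds identically in $j$, so all boundary cases are subsumed without any case analysis. I expect no subtlety beyond this reduction, the remaining computation being the elementary quadratic verification.
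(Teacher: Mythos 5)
Your proposal is correct and follows essentially the same route as the paper's proof: compare coefficients of $\alpha_{n-j}$ using $(n)_2\binom{n-2}{j-2} = \binom{n}{j}(j)_2$, absorb the boundary cases via vanishing factors $(j)_2$ and $(k)_{k+1}$, and reduce to a falling-factorial identity that is settled by extracting the common factor $(2k+2-j)_k$ and checking a quadratic. Your substitution $a = 2k+2-j$ is only a cosmetic repackaging of the paper's final verification, which works directly in $\nu$.
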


\begin{proof}
We define $\binom{n}{\nu} = 0$ for negative indices $\nu$. This allows us to
establish the identity
\[
  (n)_2 \binom{n-2}{\nu-2} = (\nu)_2 \binom{n}{\nu} \quad (\nu \geq 0).
\]
Let $n - 2 \geq k \geq 0$. We obtain by using the above identity and shifting
the index $\nu$ that
\begin{equation} \label{eq:la-n-2}
  (n)_2 \, \Lambda_{n-2,k}(\alpha)
    = \sum_{\nu=0}^{k+2} \binom{n}{\nu} (\nu)_2 (2k+2-\nu)_k \, \alpha_{n-\nu}.
\end{equation}
Regarding the sum $\Lambda_{n,k+1}(\alpha)$ in \eqref{eq:la-recur}, we can
extend the summation over $\nu$ to $k+2$, since the added summand vanishes for
$\nu = k+2$ due to $(2k+2-\nu)_{k+1} = (k)_{k+1} = 0$ for $k \geq 0$.
Consequently, we can collect the coefficients of $\alpha_{n-\nu}$ in
\eqref{eq:la-recur} and \eqref{eq:la-n-2} for $0 \leq \nu \leq k+2$.
Neglecting the common factor $\binom{n}{\nu}$, \eqref{eq:la-recur} turns for
each $\nu$ into
\[
  (\nu)_2 (2k+2-\nu)_k = (2k+4-\nu)_{k+2} - (4k+6)(2k+2-\nu)_{k+1}.
\]
Writing as a difference and factoring the term $(2k+2-\nu)_k$ out, we infer that
\[
  (\nu)_2 - (2k+4-\nu)_2 + (4k+6)(k+2-\nu) = 0.
\]
This implies the result.
\end{proof}

The proof above was similarly given in \cite[Lem.~9.5]{Kellner:2021} with
polynomials $\Lambda_{n,k}(x)$ instead of $\Lambda_{n,k}(\alpha)$.
As an application, Lemma~\ref{lem:la-recur} is essential for the following theorem.

\begin{theorem} \label{thm:ag-recur}
Assume property \eqref{eq:ap-ref}.
For $n \geq k \geq 0$, we have
\begin{align}
  \AG_{n,2k}^{(k)}(1) &= (-1)^n k! \sum_{\nu=0}^{k}
    \binom{2k-\nu}{k} \binom{n}{\nu} \alpha_{n-\nu}.
  \label{eq:ag-2k} \\
\shortintertext{The numbers $\an_{n,k} = \AG_{n,2k}^{(k)}(1)$ obey for $n \geq 2$
and $0 \leq k \leq n-2$ the recurrence}
  \an_{n,k+2} &= (4k+6) \, \an_{n,k+1} + (n)_2 \, \an_{n-2,k}.
  \label{eq:ag-recur} \\
\shortintertext{For odd $n \geq 1$, the equation $\AP_n(x) = u' \, \FP_n(u)$
holds with the coefficients}
  \fh_{n,k} &= (-1)^k \, \an_{n,k} \quad (0 \leq k \leq \dd_n).
  \label{eq:fh-an}
\end{align}
\end{theorem}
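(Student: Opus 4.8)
The plan is to prove the three claims of Theorem~\ref{thm:ag-recur} essentially in sequence, exploiting results already established in the excerpt. For~\eqref{eq:ag-2k}, the derivative formula~\eqref{eq:ag-deriv-1} of Theorem~\ref{thm:ag-deriv} gives $\AG_{n,k}^{(\ell)}(1)$ for general indices; I would simply specialise it to $k = 2k$ and $\ell = k$ (a minor collision of names, but the substitution is $\binom{2k-\nu}{2k-k} = \binom{2k-\nu}{k}$), which immediately yields the stated closed form. This step is purely a specialisation and should be a one-liner.

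For the recurrence~\eqref{eq:ag-recur}, the key observation is that $\an_{n,k} = \AG_{n,2k}^{(k)}(1)$ is, up to the factor $(-1)^n k!$, exactly the sum $\Lambda_{n,k}(\alpha)$ introduced in Lemma~\ref{lem:la-recur}: comparing~\eqref{eq:ag-2k} with~\eqref{eq:la-def}, and using $\binom{2k-\nu}{k} = (2k-\nu)_k / k!$, I get $\an_{n,k} = (-1)^n \Lambda_{n,k}(\alpha)$. The plan is therefore to rewrite~\eqref{eq:ag-2k} in this $\Lambda$-form, invoke the recurrence~\eqref{eq:la-recur}, and translate back. The sign bookkeeping must be handled with care: $\an_{n-2,k}$ carries a factor $(-1)^{n-2} = (-1)^n$, so the signs on both sides of~\eqref{eq:la-recur} match after multiplying through by $(-1)^n$, and the factorial factors $k!$ cancel consistently across the three terms (the shifts $k \to k+1, k+2$ change $k!$ to $(k+1)!, (k+2)!$, which must be reconciled against the binomial-versus-falling-factorial conversion). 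I expect this reconciliation of factorials and signs to be the main obstacle; it is routine but error-prone, and I would verify it by writing each $\an$ explicitly as $(-1)^n k! \binom{2k-\nu}{k}$-weighted sums before appealing to Lemma~\ref{lem:la-recur}.

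For the final claim~\eqref{eq:fh-an}, I would start from the case $n$ odd of Lemma~\ref{lem:ap-subst} and Theorem~\ref{thm:fp-coeff}, where $\delta_n = 1$ gives $\AP_n(x) = u' \, \FP_n(u)$, so the $\fh_{n,k}$ are determined. The task is to match the explicit coefficient formula~\eqref{eq:fp-coeff} (rewritten via $\fh_{n,k} = k! \, \fp_{n,k}$) against $(-1)^k \an_{n,k}$. Rather than expanding~\eqref{eq:fp-coeff} directly, the cleaner route is to use the recurrence structure: by Theorem~\ref{thm:fp-recur}, for odd $n$ the coefficients $\fh_{n,k}$ satisfy $(n)_2 \, \fh_{n-2,k} = (4k+6)\,\fh_{n,k+1} + \fh_{n,k+2}$, whereas $\an_{n,k}$ satisfies~\eqref{eq:ag-recur}. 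Setting $g_{n,k} = (-1)^k \an_{n,k}$ and checking that $g$ obeys the same recurrence as $\fh$ reduces the sign factor $(-1)^k$ through the shifts $k, k+1, k+2$ to a consistent overall sign, so the two sequences agree once a base case is verified. I would close the induction by checking agreement at $k = 0$ (or at the top index $k = \dd_n$) using the known boundary values $\fp_{n,0} = (-1)^n\alpha_n$ from Theorem~\ref{thm:fp-coeff} against $\an_{n,0} = \AG_{n,0}(1) = (-1)^n \alpha_n$ from~\eqref{eq:ag-val-1}. The potential snag here is that the odd-$n$ recurrence in Theorem~\ref{thm:fp-recur} for $\fh$ and the recurrence~\eqref{eq:ag-recur} for $\an$ have matching shape only because $n$ is odd; I would state explicitly that the alternating sign $(-1)^k$ is precisely what converts one into the other.
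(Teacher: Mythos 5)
Your treatment of \eqref{eq:ag-2k} and \eqref{eq:ag-recur} is correct and is exactly the paper's argument: specialise \eqref{eq:ag-deriv-1} to the index pair $(2k,k)$, then identify $\an_{n,k} = (-1)^n \Lambda_{n,k}(\alpha)$ via $k!\binom{2k-\nu}{k} = (2k-\nu)_k$ and transfer Lemma~\ref{lem:la-recur}. One remark: the factorial reconciliation you worry about does not actually arise, because the identity $\an_{n,k} = (-1)^n \Lambda_{n,k}(\alpha)$ holds term by term for each fixed $k$ separately; the only bookkeeping left is $(-1)^{n-2} = (-1)^n$.

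The gap is in your proof of \eqref{eq:fh-an}. The recurrence you propose to match, $\fh_{n,k+2} = -(4k+6)\,\fh_{n,k+1} + (n)_2\,\fh_{n-2,k}$, determines the level-$n$ sequence from the level-$(n-2)$ sequence only once \emph{two} values at level $n$ are known: it produces $\fh_{n,k+2}$ from $\fh_{n,k+1}$ and $\fh_{n-2,k}$, and no instance of it yields $\fh_{n,1}$ (that would require a term $\fh_{n-2,-1}$). So verifying agreement only at $k=0$, as you propose, does not anchor the induction. The parenthetical alternative of anchoring at the top index $k=\dd_n$ fares no better: running the recurrence backwards still leaves $k=0$ to be checked separately, and it additionally requires a direct proof that $(-1)^{\dd_n}\an_{n,\dd_n} = \fh_{n,\dd_n} = \dd_n!\,\tfrac{1}{2}\alpha_0$, which you do not supply and which is not an easy evaluation of the sum \eqref{eq:ag-2k}. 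What is missing is the seed at $k=1$: the paper obtains it from the \emph{first} recurrence of Theorem~\ref{thm:fp-recur} (the odd case $n\,\fh_{n-1,k} = (4k+2)\,\fh_{n,k} + \fh_{n,k+1}$ taken at $k=0$), giving
\[
  \fh_{n,1} = n\,\fh_{n-1,0} - 2\,\fh_{n,0} = (-1)^{n+1}(2\alpha_n + n\alpha_{n-1}),
\]
which equals $-\an_{n,1}$ by the case $k=1$ of \eqref{eq:ag-2k}. With the two seeds $\fh_{n,0} = \an_{n,0}$ and $\fh_{n,1} = -\an_{n,1}$ in hand, your recurrence-matching argument (your sign computation showing that $(-1)^k\an_{n,k}$ and $\fh_{n,k}$ satisfy the same recurrence is correct) closes the induction on odd $n$ exactly as in the paper. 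So the strategy is the right one, but as written the induction cannot start.
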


\begin{proof}
Theorem~\ref{thm:ag-deriv} and \eqref{eq:ag-deriv-1} provide \eqref{eq:ag-2k}
for $n \geq k \geq 0$. The case $k=0$ of \eqref{eq:ag-2k} yields
\begin{alignat}{2}
  \an_{n,0} &= \AG_{n,0}(1) = (-1)^n \alpha_n &&\quad (n \geq 0)
  \label{eq:an-0} \\
\shortintertext{and the next numbers $\an_{n,1}$ are given by}
  \an_{n,1} &= (-1)^n ( 2 \alpha_n + n \alpha_{n-1} ) &&\quad (n \geq 1).
  \label{eq:an-1}
\end{alignat}
From Lemma~\ref{lem:la-recur} and \eqref{eq:la-def}, we infer that
$\an_{n,k} = (-1)^n \Lambda_{n,k}(\alpha)$.
Considering the parity of the sign $(-1)^n$, one sees that
recurrences~\eqref{eq:la-recur} and \eqref{eq:ag-recur} are equivalent.
Thus, Lemma~\ref{lem:la-recur} implies the validity of \eqref{eq:ag-recur}.

Next, we show the desired properties of the coefficients $\fh_{n,k}$.
By Theorem~\ref{thm:fp-coeff} we infer that
\begin{equation} \label{eq:fh-0}
  \fh_{n,0} = \fp_{n,0} = (-1)^n \alpha_n \quad (n \geq 0),
\end{equation}
while Theorem~\ref{thm:fp-recur} implies for odd $n \geq 3$ that
\begin{equation} \label{eq:fh-1}
  \fh_{n,1} = n \fh_{n-1,0} - 2 \fh_{n,0}
    = (-1)^{n+1} ( 2 \alpha_n + n \alpha_{n-1} )
\end{equation}
as well as the recurrence
\begin{equation} \label{eq:fh-k}
  \fh_{n,k+2} = -(4k+6) \, \fh_{n,k+1} + (n)_2 \, \fh_{n-2,k}.
\end{equation}

Comparing \eqref{eq:an-0} and \eqref{eq:an-1} with \eqref{eq:fh-0} and
\eqref{eq:fh-1}, we obtain
\begin{equation} \label{eq:fh-an-0}
  \fh_{n,0} = \an_{n,0} \quad (n \geq 1 \text{ odd}) \andq
  \fh_{n,1} = -\an_{n,1} \quad (n \geq 3 \text{ odd}).
\end{equation}
We have to show that \eqref{eq:fh-an} holds for odd $n \geq 1$ and
$0 \leq k \leq \dd_n$. By \eqref{eq:fh-an-0} this is satisfied for the initial
cases $n=1$ and $n=3$. Now, we use induction on $n$. Let $n \geq 5$ be odd.
We assume that \eqref{eq:fh-an} holds for $n-2$ and prove for $n$.
Note that $(-1)^k \, \an_{n,k}$ and $\fh_{n,k}$ satisfy equivalent recurrences
by comparing \eqref{eq:ag-recur} with \eqref{eq:fh-k} and considering the sign
change by $(-1)^k$. The initial values agree for $k=0,1$ and the cases $n$ and
$n-2$ by \eqref{eq:fh-an-0}. Using both recurrences \eqref{eq:ag-recur} and
\eqref{eq:fh-k} iteratively for $k=0,\ldots,\dd_n-2$ implies \eqref{eq:fh-an}
completely for the case $n$. This shows the induction and completes the proof.
\end{proof}

Recall Theorems~\ref{thm:as-palin} and \ref{thm:as-palin-2}. Assuming property
\eqref{eq:ap-ref}, the polynomials $\AS_n(x)$ and their coefficients
$\ac_{n,k}$ are quasi-palindromic. This also transfers to the polynomials
$\AS_{n,k}(x)$ for \mbox{$k \geq n \geq 0$}. As a main result, we achieve the
following theorem that summarizes the properties of Faulhaber-type polynomials
and their coefficients.

\begin{theorem} \label{thm:fp-main}
Assume property \eqref{eq:ap-ref}. Let $n \geq 1$ be odd.
The Faulhaber-type polynomial satisfies
\[
  \AP_n(x) = u' \, \FP_n(u),
\]
where
\begin{equation} \label{eq:fp-main}
  (-1)^k \, \fp_{n,k} = \coeff{x^k} \, \AS_{n,2k}(x)
    = \frac{1}{k!} \, \AG_{n,2k}^{(k)}(1) \quad (k \geq 0).
\end{equation}
In particular,
\[
  \AS_{n,2k}(x) \in \begin{cases}
    \CC[[x]], & \text{if $0 \leq k \leq \dd_n$}, \\
    \CC[x], & \text{if $k > \dd_n$},
  \end{cases}
\]
where in the latter case $\AS_{n,2k}(x)$ is anti-palindromic.

The coefficients satisfy that
\[
  \fp_{n,0} = - \alpha_n, \quad \fp_{n,\dd_n} = \tfrac{1}{2} \alpha_0,
  \andq \fp_{n,k} = 0 \quad (k > \dd_n).
\]
Moreover, we have for $0 \leq k \leq n$ the expressions
\begin{equation} \label{eq:fp-expr}
  \fp_{n,k} = (-1)^k \sum_{\nu=0}^{k} \binom{2k-n}{k-\nu} \ac_{n,\nu}
    = (-1)^{k+1} \sum_{\nu=0}^{k}
    \binom{2k-\nu}{k} \binom{n}{\nu} \alpha_{n-\nu}.
\end{equation}

In the even case $n+1$, we have the relation
\[
  \AP_{n+1}(x) = \FP_{n+1}(u)
\]
with
\[
  \fp_{n+1,k} = \begin{cases}
    \alpha_{n+1}, &\text{if $k=0$}, \\
    \frac{n+1}{k} \, \fp_{n,k-1}, &\text{if $1 \leq k < \dd_{n+1}$}, \\
    \alpha_0, &\text{if $k = \dd_{n+1}$}, \\
    0, &\text{if $k > \dd_{n+1}$}.
  \end{cases}
\]
\end{theorem}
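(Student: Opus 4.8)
The plan is to assemble the statement from the structural results already established: the odd case rests on Theorem~\ref{thm:fp-coeff}, Theorem~\ref{thm:ag-recur}, and Theorem~\ref{thm:as-palin-2}, while the even case is a direct consequence of the derivative recurrence in Theorem~\ref{thm:fp-recur}. First I would record that for odd $n$ one has $\delta_n = 1$, so $(u')^{\delta_n} = u'$, and hence $\AP_n(x) = u'\,\FP_n(u)$ is exactly \eqref{eq:ap-fp}. To prove the central equality chain \eqref{eq:fp-main} in the range $0 \leq k \leq \dd_n$, I would combine the normalization $\fp_{n,k} = \fh_{n,k}/k!$ with the identity $\fh_{n,k} = (-1)^k\,\an_{n,k}$ from \eqref{eq:fh-an}, where $\an_{n,k} = \AG_{n,2k}^{(k)}(1)$; this yields $(-1)^k\,\fp_{n,k} = \tfrac{1}{k!}\,\AG_{n,2k}^{(k)}(1)$. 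The remaining equality with $\coeff{x^k}\,\AS_{n,2k}(x)$ then follows by specializing \eqref{eq:as-coeff} with the index substitution $k \mapsto 2k$ and $\ell \mapsto k$, which is admissible because $n \geq k$ and $2k \geq k$.

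Next I would treat the complementary range $k > \dd_n$ and simultaneously extract the $\CC[[x]]$ versus $\CC[x]$ dichotomy. By definition the Faulhaber-type polynomial has degree $\dd_n$ in $u$, so $\fp_{n,k} = 0$ there. For odd $n$ the inequality $k > \dd_n$ is equivalent to $2k > n$, so Theorem~\ref{thm:as-palin-2} shows that $\AS_{n,2k}(x)$ is a genuine polynomial (giving the $\CC[x]$ case), and moreover anti-palindromic because $n$ is odd. Since its index $2k$ is even, the vanishing of the central coefficient stated in Theorem~\ref{thm:as-palin-2} gives $\coeff{x^k}\,\AS_{n,2k}(x) = 0$, matching $\fp_{n,k} = 0$; the third member of \eqref{eq:fp-main} then vanishes as well by \eqref{eq:as-coeff}. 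In the opposite regime $0 \leq k \leq \dd_n$, equivalently $2k < n$, the same theorem places $\AS_{n,2k}(x)$ in $\CC[[x]]$, which is the remaining case.

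With \eqref{eq:fp-main} in hand, the explicit formulas \eqref{eq:fp-expr} drop out for $0 \leq k \leq n$: applying \eqref{eq:as-coeff} to the middle member produces the expansion in the $\ac_{n,\nu}$, while applying \eqref{eq:ag-2k} to the right member (using $(-1)^n = -1$) produces the expansion in the $\alpha_{n-\nu}$. The boundary values $\fp_{n,0} = -\alpha_n$ and $\fp_{n,\dd_n} = \tfrac{1}{2}\alpha_0$ are read directly from Theorem~\ref{thm:fp-coeff} with $\delta_n = 1$. For the even index $n+1$ I would note $\delta_{n+1} = 0$, so \eqref{eq:ap-fp} gives $\AP_{n+1}(x) = \FP_{n+1}(u)$; the recurrence $\fp_{n+1,k} = \tfrac{n+1}{k}\,\fp_{n,k-1}$ follows from the even-index relation $(n+1)\,\fh_{n,k} = \fh_{n+1,k+1}$ of Theorem~\ref{thm:fp-recur} after converting via $\fh_{n,k} = k!\,\fp_{n,k}$, and the endpoint values $\alpha_{n+1}$ and $\alpha_0$ come again from Theorem~\ref{thm:fp-coeff} with $\delta_{n+1} = 0$; a short check shows the recurrence is consistent at $k = \dd_{n+1}$, where $\dd_{n+1} = \dd_n + 1$ forces $\tfrac{n+1}{\dd_{n+1}}\cdot\tfrac12\alpha_0 = \alpha_0$.

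The bulk of the work is bookkeeping rather than a single hard estimate, so the main obstacle is organizational: keeping the index substitution $k \mapsto 2k$ consistent across \eqref{eq:as-coeff}, \eqref{eq:ag-2k}, and \eqref{eq:fh-an}, and in particular justifying that all three members of \eqref{eq:fp-main} agree for every $k \geq 0$. The delicate point is the transition at $k = \dd_n$, where the identification $\fh_{n,k} = (-1)^k\,\an_{n,k}$ from \eqref{eq:fh-an} ceases to be available and one must instead invoke the anti-palindromy and the vanishing central coefficient of $\AS_{n,2k}(x)$ from Theorem~\ref{thm:as-palin-2} to close the gap.
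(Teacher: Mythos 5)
Your proposal is correct and follows essentially the same route as the paper's own proof: the range $0 \leq k \leq \dd_n$ is handled by combining Theorems~\ref{thm:fp-coeff}, \ref{thm:ag-recur} (via \eqref{eq:fh-an}) and \ref{thm:as-palin-2} (via \eqref{eq:as-coeff}), the range $k > \dd_n$ by the anti-palindromy and vanishing central coefficient from Theorem~\ref{thm:as-palin-2}, and the even case $n+1$ by the even-index recurrence of Theorem~\ref{thm:fp-recur} together with the endpoint values from Theorem~\ref{thm:fp-coeff}. Your version merely makes explicit the index substitutions and the consistency check at $k = \dd_{n+1}$ that the paper leaves implicit.
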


\begin{proof}
Let $n \geq 1$ be odd. We first consider the restriction $0 \leq k \leq \dd_n$.
Under this restriction, the expressions \eqref{eq:fp-main} and
\eqref{eq:fp-expr} for the coefficients $\fp_{n,k}$ as well as the values
$\fp_{n,0}$ and $\fp_{n,\dd_n}$ follow from Theorems~\ref{thm:as-palin-2},
\ref{thm:fp-coeff}, and \ref{thm:ag-recur}. We also have
$\AS_{n,2k}(x) \in \CC[[x]]$ by \eqref{eq:as-gen}.

For $k > \dd_n$, Theorem~\ref{thm:as-palin-2} shows that
$\AS_{n,2k}(x) \in \CC[x]$ is an anti-palindromic polynomial. Therefore,
$\coeff{x^k} \, \AS_{n,2k}(x) = 0$, which coincides with \mbox{$\fp_{n,k} = 0$}
by definition. Hence, relation~\eqref{eq:fp-main} is valid for all $k \geq 0$.
As a consequence, \eqref{eq:fp-expr} holds for $0 \leq k \leq n$.

The even case $n+1$ also follows from Theorem~\ref{thm:fp-coeff}. This provides
the values $\fp_{n+1,k}$ for \mbox{$k \in \set{0,\dd_{n+1}}$}, while
$\fp_{n+1,k} = 0$ for \mbox{$k > \dd_{n+1}$} by definition. It remains to
evaluate the coefficients $\fp_{n+1,k}$ for \mbox{$1 \leq k < \dd_{n+1}$}.
Theorem~\ref{thm:fp-recur} shows that $\fh_{n+1,k} = (n+1) \, \fh_{n,k-1}$,
which implies the result. This completes the proof.
\end{proof}

\renewcommand\arraystretch{1.3}

As a consequence of Theorem~\ref{thm:fp-main}, we obtain for odd \mbox{$n \geq 1$}
and \mbox{$k \geq (n+1)/2$} the following implications in view of symmetry relations:
\[
\begin{array}{c}
  \AP_n(x) \text{\ satisfies\ } \eqref{eq:ap-ref} \\
  \Downarrow \\
  \AS_{n,2k}(x) \text{\ is anti-palindromic} \\
  \Downarrow \\
  \coeff{x^k} \, \AS_{n,2k}(x) = 0 \\
  \Downarrow \\
  \AG_{n,2k}^{(k)}(1) = 0 \\
  \Downarrow \\
  \fp_{n,k} = 0.
\end{array}
\]

Together with Corollary~\ref{cor:ac-as},
the symmetry properties imply the following recurrences.

\begin{corollary} \label{cor:recur}
Assume property \eqref{eq:ap-ref}. Let $n \geq 1$ be odd.
For $(n+1)/2 \leq k \leq n$, there are the following recurrences:
\begin{alignat*}{2}
  &\sum_{\nu=0}^{k} \binom{2k-n}{k-\nu} \ac_{n,\nu} &&= 0, \\
  &\sum_{\nu=0}^{k} \binom{2k-n}{k-\nu} \binom{n}{\nu} \alpha_{n-\nu,\nu} &&= 0, \\
  &\sum_{\nu=0}^{k} \binom{2k-\nu}{k} \binom{n}{\nu} \alpha_{n-\nu} &&= 0.
\end{alignat*}
\end{corollary}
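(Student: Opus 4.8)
The plan is to read off all three identities as immediate consequences of the vanishing of the top coefficients of the Faulhaber-type polynomial $\FP_n(u)$, exactly as anticipated by the chain of implications displayed above the statement. First I would record the index arithmetic: for odd $n$ one has $\dd_n = (n-1)/2$, so the hypothesis $(n+1)/2 \leq k$ is precisely the condition $k > \dd_n$. By the definition of $\FP_n(u)$ together with Theorem~\ref{thm:fp-main}, this forces $\fp_{n,k} = 0$ throughout the stated range $(n+1)/2 \leq k \leq n$.

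Next I would invoke the two explicit formulas \eqref{eq:fp-expr}, which Theorem~\ref{thm:fp-main} guarantees on the full range $0 \leq k \leq n$, in particular on our subrange. Setting the first expression to zero gives $(-1)^k \sum_{\nu=0}^{k} \binom{2k-n}{k-\nu} \ac_{n,\nu} = 0$, and cancelling the nonzero sign $(-1)^k$ yields the first claimed identity; setting the second expression to zero and cancelling $(-1)^{k+1}$ yields the third. The middle identity is then obtained from the first by substituting the relation $\ac_{n,\nu} = \binom{n}{\nu} \alpha_{n-\nu,\nu}$ supplied by Corollary~\ref{cor:ac-as}.

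There is no genuine obstacle here; the only point requiring care is the index matching. One must verify that $(n+1)/2 = \dd_n + 1$ holds exactly for odd $n$, so that the vanishing regime $k > \dd_n$ coincides with the stated range, and that the validity of \eqref{eq:fp-expr} extends all the way up to $k = n$ rather than only up to $k = \dd_n$. Both facts are already established in Theorem~\ref{thm:fp-main}, so the corollary follows by direct substitution. Equivalently, one could phrase the argument through the anti-palindromy of $\AS_{n,2k}(x)$ for $k > \dd_n$, whose vanishing central coefficient $\coeff{x^k} \AS_{n,2k}(x) = \frac{1}{k!}\AG_{n,2k}^{(k)}(1) = 0$ delivers the same three sums via \eqref{eq:fp-main} and \eqref{eq:fp-expr}.
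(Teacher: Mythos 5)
Your proposal is correct and follows essentially the same route as the paper: the paper's (implicit) proof is precisely the displayed chain of implications ending in $\fp_{n,k}=0$ for $k>\dd_n$, combined with the expressions \eqref{eq:fp-expr} of Theorem~\ref{thm:fp-main} (valid up to $k=n$) and the identity $\ac_{n,\nu}=\binom{n}{\nu}\alpha_{n-\nu,\nu}$ from Corollary~\ref{cor:ac-as} for the middle sum. Your index check that $(n+1)/2=\dd_n+1$ for odd $n$ is exactly the point that makes the stated range coincide with the vanishing regime, so nothing is missing.
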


For instance, the case $n=1$ of Corollary~\ref{cor:recur} provides that
$\ac_{1,0}+\ac_{1,1} = \alpha_{1,0} + \alpha_{0,1} = 0$,
which coincides with Corollary~\ref{cor:ac-recip},
and $2\alpha_1 + \alpha_0 = 0$ reflects \eqref{eq:ap-ref-4}.
Recall that $\EN_n(0) = \GN_{n+1}/(n+1)$, so Corollary~\ref{cor:recur}
applied to the Euler polynomials induces the following result for the
Genocchi numbers.

\begin{example}
Let $n \geq 1$ be odd. For $(n+1)/2 \leq k \leq n$, we have
\[
  \sum_{\nu=0}^{k} \binom{2k-\nu}{k} \binom{n}{\nu} \frac{\GN_{n+1-\nu}}{n+1-\nu} = 0.
\]
\end{example}

As a last application of Theorem~\ref{thm:fp-main}, we obtain the following
result fairly quickly compared to the usual approaches.

\begin{example} \label{exp:appl}
The special case of the Bernoulli polynomials $\BN_n(x)$ and the classical
Faulhaber polynomials in the context of the power-sum function
\[
   S_n(m) = \sum_{\nu=0}^{m-1} \nu^n = \frac{1}{n+1}( \BN_{n+1}(m) - \BN_{n+1} )
     \quad (m, n \geq 0)
\]
is comprehensively discussed in \cite{Kellner:2021}. We briefly show for odd
$n \geq 1$ that all parts fit together in a \emph{miraculous} way. There exists
a unique equation with the classical Faulhaber polynomial $\FF_n(y)$ such that
\[
  S_n(m) = \FF_n( y ) \quad \text{with\ } y = S_1(m) = \binom{m}{2},
\]
cf.~Knuth~\cite{Knuth:1993}. Using Theorem~\ref{thm:fp-main} with
$\AP_n(x) = \BN_n(x)$, it follows at once that
\[
  S_n(m) = \int_{0}^{m} \BN_n(x) \, dx = \int_{0}^{m} u' \, \FP_n(u) \, dx
    = \int_{0}^{2y} \FP_n(u) \, du = \FF_n( y ),
\]
where the coefficients of $\FF_n( y )$ depend on the coefficients $\fp_{n,k}$,
hence being expressible by terms of generalized reciprocal Bernoulli polynomials
(for explicit formulas see \cite{Kellner:2021}).
\end{example}

%%%%%%%%%%%%%%%%%%%%%%%%%%%%%%%%%%%%%%%%%%%%%%%%%%%%%%%%%%%%%%%%%%%%%%%%%%%%%%%%
% Section
%%%%%%%%%%%%%%%%%%%%%%%%%%%%%%%%%%%%%%%%%%%%%%%%%%%%%%%%%%%%%%%%%%%%%%%%%%%%%%%%

\section*{Acknowledgment}

We thank the anonymous referee for several suggestions.

%%%%%%%%%%%%%%%%%%%%%%%%%%%%%%%%%%%%%%%%%%%%%%%%%%%%%%%%%%%%%%%%%%%%%%%%%%%%%%%%
% Bibliography
%%%%%%%%%%%%%%%%%%%%%%%%%%%%%%%%%%%%%%%%%%%%%%%%%%%%%%%%%%%%%%%%%%%%%%%%%%%%%%%%

\bibliographystyle{amsplain}

\end{document}